\documentclass[12pt]{article}

\usepackage{amsmath,amssymb,amscd,amsthm,makeidx,txfonts,graphicx,url,bm}
\usepackage{xypic}
\usepackage[backref]{hyperref}

\setlength{\oddsidemargin}{-.35in}
\setlength{\textwidth}{7.2in}
\setlength{\topmargin}{-.7in}

\setlength{\textheight}{9.75in}

\setlength{\textheight}{9.75in}

\numberwithin{equation}{section}

\setlength{\marginparwidth}{1.2in}
\let\oldmarginpar\marginpar
\renewcommand\marginpar[1]{\-\oldmarginpar[\raggedleft\footnotesize #1]
{\raggedright\footnotesize #1}}

\makeindex

\xyoption{all}
\input{xypic}

\newtheorem{theorem}{Theorem}[section]
\newtheorem{proposition}[theorem]{Proposition}
\newtheorem{corollary}[theorem]{Corollary}
\newtheorem{conjecture}[theorem]{Conjecture}
\newtheorem{lemma}[theorem]{Lemma}

\theoremstyle{remark}
\newtheorem{remark}[theorem]{Remark}

\theoremstyle{definition}

\newcounter{margin}
{\end{itshape}  \bigskip}

\def\beq{\begin{eqnarray}}
\def\eeq{\end{eqnarray}}
\def\bes{\begin{eqnarray*}}
\def\ees{\end{eqnarray*}}
\def\vv{{\bf v}}

\def\muhat{{\bm \mu}}

\def\lambdahat{{\bm \lambda}}
\def\betahat{{\bm \beta}}
\def\alphahat{{\bm \alpha}}

\def\omhat{{\bm \omega}}

\def\Apol{Kac polynomial }
\def\Apols{Kac polynomials }

\DeclareMathOperator{\gen}{gen}
\DeclareMathOperator{\reg}{reg}
\DeclareMathOperator{\Trace}{Tr}
\DeclareMathOperator{\Rep}{Rep}
\DeclareMathOperator{\opp}{opp}
\DeclareMathOperator{\ch}{ch}

\def\C{\mathbb{C}}
\def\calK{\mathcal{K}}
\def\M{{\mathcal{M}}}
\def\calQ{{\mathcal{Q}}}
\def\calA{{\mathcal{A}}}

\def\calF{{\mathcal{F}}}

\def\pihat{{\bf \pi}}
\def\calP{\mathcal{P}}
\def\calH{\mathcal{H}}
\def\calZ{\mathcal{Z}}
\def\bG{\mathbb{G}}
\def\DT{{\rm DT}}

\def\z{{\mathbf{x}}}

\def\x{\mathbf{x}}

\def\v{\mathbf{v}}

\def\P{\mathcal{P}}

\def\bH{\mathbb{H}}
\def\calE{\mathcal{E}}
\def\Hs{\mathbb{H}^s}

\def\N{\mathbb{Z}_{\geq 0}}

\def\F{\mathbb{F}}
\def\Q{\mathbb{Q}}

\def\T{\mathbb{T}}

\def\t{\mathfrak{t}}
\def\tGamma{\tilde{\Gamma}}
\def\tI{\tilde{I}}
\def\tv{{\tilde{\v}}}
\def\talpha{{\cup\muhat}}
\def\tomega{{\tilde{\omega}}}
\def\tomega{\omega}

\def\calC{{\mathcal C}}
\def\calO{{\mathcal O}}

\def\Z{\mathbb{Z}}

\def\K{\mathbb{K}}

\def\gl{{\mathfrak g\mathfrak l}}

\newcommand{\nc}{\newcommand}

\nc{\op}[1]{\mathop{\mathchoice{\mbox{\rm #1}}{\mbox{\rm #1}}
{\mbox{\rm \scriptsize #1}}{\mbox{\rm \tiny #1}}}\nolimits}
\nc{\al}{\alpha}

\nc{\ep}{\varepsilon} \nc{\ga}{\gamma} \nc{\Ga}{\Gamma}
\nc{\la}{\lambda} \nc{\La}{\Lambda} \nc{\si}{\sigma}
\nc{\Sig}{{\Gamma}} \nc{\Om}{\Omega} \nc{\om}{\omega}

\nc{\SL}{{\rm SL}} \nc{\GL}{{\rm GL}} \nc{\PGL}{{\rm PGL}}

\nc{\cpt}{{\op{cpt}}} \nc{\Dol}{{\op{Dol}}} \nc{\DR}{{\op{DR}}}
\nc{\B}{{\op{B}}} \nc{\Triv}{\op{Triv}} \nc{\Hod}{{\op{Hod}}}
\nc{\Log}{{\op{Log}}} \nc{\Exp}{{\op{Exp}}} \nc{\Est}{E_{\op{st}}}
\nc{\Hst}{H_{\op{st}}} \nc{\Left}[1]{\hbox{$\left#1\vbox to
  10.5pt{}\right.\nulldelimiterspace=0pt \mathsurround=0pt$}}
\nc{\Right}[1]{\hbox{$\left.\vbox to
  10.5pt{}\right#1\nulldelimiterspace=0pt \mathsurround=0pt$}}
\nc{\LEFT}[1]{\hbox{$\left#1\vbox to
  15.5pt{}\right.\nulldelimiterspace=0pt \mathsurround=0pt$}}
\nc{\RIGHT}[1]{\hbox{$\left.\vbox to
  15.5pt{}\right#1\nulldelimiterspace=0pt \mathsurround=0pt$}}

\nc{\bee}{{\bf E}} \nc{\bphi}{{\bf \Phi}}

\begin{document}

\title{Positivity of \Apols and DT-invariants for quivers}

\author{ Tam\'as Hausel
\\ {\it EPF Lausanne}
\\{\tt tamas.hausel@epfl.ch} \and Emmanuel Letellier \\ {\it
  Universit\'e de Caen} \\ {\tt
  letellier.emmanuel@math.unicaen.fr}\and Fernando Rodriguez-Villegas
\\ 
{\it University of Texas at Austin} \\ {\tt
  villegas@math.utexas.edu}\\ \\
 }

\pagestyle{myheadings}

\maketitle

\begin{abstract}

  We give a cohomological interpretation of both the \Apol
  and the refined Donaldson-Thomas-invariants of quivers. This
  interpretation yields a proof of a conjecture of Kac from 1982 and
  gives a new perspective on recent work of Kontsevich--Soibelman.
  This is achieved by computing, via an arithmetic Fourier transform,
  the dimensions of the isoytpical components of the cohomology of
  associated Nakajima quiver varieties under the action of a Weyl
  group.  The generating function of the corresponding Poincar\'e
  polynomials is an extension of Hua's formula for \Apols of
  quivers involving Hall-Littlewood symmetric functions.  The
  resulting formulae contain a wide range of information on the
  geometry of the quiver varieties.
\end{abstract}

\section{The main results}\label{results}

Let $\Gamma=(I,\Omega)$ be a quiver: that is, an oriented graph on a
finite set $I=\{1,\dots,r\}$ with $\Omega$  a finite multiset of oriented edges.
In his study of the representation theory of quivers, Kac \cite{kac2}
introduced $A_\vv(q)$, the number of isomorphism classes of absolutely
indecomposable representations of $\Gamma$ over the finite field
$\F_q$ of dimension $\vv=(v_1,\ldots,v_r)$ and showed they are
polynomials in $q$. We call $A_\vv(q)$ the {\it Kac polynomial} for
$\Gamma$ and $\vv$.  Following ideas of Kac \cite{kac2}, Hua
\cite{hua} proved the following generating function identity:
\beq \label{hua} \sum_{\vv\in \N^r\backslash\{0\}} A_\vv(q)
T^\vv=(q-1)\Log\left( \sum_{\pihat=(\pi^1,\dots,\pi^r) \in \calP^r}
  \frac{\prod_{i\rightarrow j\in \Omega} q^{\langle
      \pi^i,\pi^j\rangle}} {\prod_{i\in I}
    q^{\langle\pi^i,\pi^i\rangle}
    \prod_{k}\prod_{j=1}^{m_k(\pi^i)}(1-q^{-j})
  }T^{|\pihat|}\right),\eeq where $\calP$ denotes the set of
partitions of all positive integers, $\Log$ is the plethystic
logarithm (see \cite[\S 2.3.3]{aha}), $\langle,\rangle$ is the pairing
on partitions defined by
$$
\langle \lambda,\mu\rangle=\sum_{i,j}
\min(i,j) m_i(\lambda)m_j(\mu)
$$
with $m_j(\lambda)$ the multiplicity of the part $j$ in the partition
$\lambda$, $T^\vv:=T_1^{v_1}\cdots T_r^{v_r}$ for some variables $T_i$
and finally $|\pihat|:=(|\pi_1|\cdots |\pi_r|)$

Using such generating functions Kac \cite{kac2} proved that in fact
$A_\vv(q)$ has integer coefficients and formulated two main
conjectures.  First, he conjectured that for quivers with no loops the
constant term $A_\vv(0)$ equals the multiplicity of the root $\vv$ in
the corresponding Kac-Moody algebra. The proof of this conjecture was
completed in \cite{hausel-kac}.  We will give a general proof of Kac's
second conjecture here:

\begin{conjecture}[{\rm \cite[Conjecture 2]{kac2}}] \label{kac} The
  \Apol $A_\vv(q)$ has non-negative coefficients.  
\end{conjecture}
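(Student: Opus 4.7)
\medskip\noindent\textbf{Proof strategy.}
The plan is to realise $A_\vv(q)$ as the Poincar\'e polynomial of a graded vector space, whence non-negativity of the coefficients is automatic. Concretely, we shall identify $A_\vv(q)$ with the Poincar\'e polynomial of a distinguished Weyl-group isotypical component in the compactly supported cohomology of a Nakajima quiver variety attached to $\Gamma$, thereby expressing each coefficient as the dimension of a vector space.

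The first step is to extend $\Gamma$ by a framing vertex carrying a generic dimension vector $\w$ and to form the associated Nakajima quiver variety $\M_{\vv,\w}$. For generic stability parameters this is smooth and hyperk\"ahler, and its cohomology carries a natural action of a Weyl group $W=W_\vv$ attached to $\vv$. The second step is to compute $\#\M_{\vv,\w}(\F_q)$ by an arithmetic Fourier transform: rewrite the solutions of the moment-map equations as a character sum on $\GL_\vv(\F_q)$ and apply the orthogonality relations for irreducible characters; Green's description of $\Irr\,\GL_n(\F_q)$ in terms of Hall-Littlewood symmetric functions then converts this into an explicit generating function which refines the right-hand side of \eqref{hua}, and in which the $W$-isotypical decomposition appears as a Hall-Littlewood-indexed sum.

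The third step is to match $A_\vv(q)$ to a specific isotypical component. Comparing the refined generating function with Hua's formula \eqref{hua} should single out one irreducible representation of $W$ (plausibly the sign representation), so that the Betti numbers of the corresponding isotypical piece of $H^*_c(\M_{\vv,\w})$ are exactly the coefficients of $A_\vv(q)$, up to a shift by a normalising power of $q$. Non-negativity of those Betti numbers then yields Conjecture \ref{kac}.

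The principal obstacle is twofold. First, converting $\F_q$-point counts into Poincar\'e polynomials via the Weil conjectures requires \emph{purity} of the mixed Hodge structure on $H^*_c(\M_{\vv,\w})$; this should follow from the hyperk\"ahler structure together with the existence of a $\Gm$-action whose fixed locus is proper, but requires care when $\Gamma$ has loops and when one works equivariantly for $W$. Second, the combinatorial bookkeeping needed to isolate the correct isotypical component through the Hall-Littlewood refinement, i.e.\ matching Green polynomials against the plethystic logarithm appearing in \eqref{hua}, is delicate and will dictate the precise normalisation, including the degree shift that relates Betti numbers to coefficients of $A_\vv(q)$.
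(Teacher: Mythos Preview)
Your outline follows the paper's strategy closely: build a smooth quiver variety carrying a $W_\vv$-action on cohomology, count $\F_q$-points by an arithmetic Fourier transform that produces Hall--Littlewood functions, invoke purity to pass to Betti numbers, and read off $A_\vv(q)$ as the sign-isotypical piece. Your guess that the sign representation is the right one is exactly correct (see~\eqref{A-pol}).

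There is, however, a genuine gap in the construction. A Nakajima framing $\M_{\vv,\w}$ with \emph{generic} $\w$ does give a smooth variety, but there is then no evident action of $W_\vv=\prod_i S_{v_i}$ on its cohomology: genericity of $\w$ destroys the symmetry you need. The paper's key device is different. To each vertex $i$ one attaches a \emph{leg} of length $v_i-1$ with decreasing dimensions $v_i-1,v_i-2,\ldots,1$, producing an extended quiver $\tilde{\Gamma}_\vv$ with dimension vector $\tilde{\vv}$. This $\tilde{\vv}$ is always indivisible (so the generic quiver variety $\calQ_{\tilde{\vv}}$ is smooth even when $\vv$ is divisible), \emph{and} it is fixed by the subgroup $W_\vv$ of the Weyl group of $\tilde{\Gamma}_\vv$ generated by the reflections at the new leg vertices. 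That is what produces the $W_\vv$-action. Your proposal does not supply a mechanism giving both smoothness and the Weyl group action simultaneously.

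Two smaller points. First, the paper's Fourier transform is on the \emph{Lie algebra} $\gl_\vv(\F_q)$, not a character sum on the group: one computes the Fourier transform of the indicator of a generic regular semisimple adjoint orbit, and Green polynomials (hence Hall--Littlewood functions) enter that way. Second, transporting the $W_\vv$-action, which is defined over $\C$ via the hyperk\"ahler structure (following Maffei), to $\F_q$-point counts of \emph{twisted} varieties $\calQ^w_{\tilde{\vv}}$ requires a non-trivial spreading-out and comparison argument over $\Spec\,\Z$ (Theorems~\ref{key1} and~\ref{compar}); this is more than bookkeeping and should be flagged as a substantive step.
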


This conjecture was settled for indivisible dimension vectors and any
quiver by Crawley-Boevey and Van den Bergh \cite{crawley-etal} in
2004; they gave a cohomological interpretation of the \Apol
for indivisible dimension vectors in terms of the cohomology of an
associated Nakajima quiver variety. More precisely
\cite[(1.1)]{crawley-etal}, they showed that for $\vv$ indivisible
\beq \label{indiv} A_\vv(q)=\sum \dim
\left(H_c^{2i}(\calQ_\vv;\C)\right) q^{i-d_\vv},\eeq where $\calQ_\vv$
is a certain smooth {\em generic} complex quiver variety of dimension
$2d_\vv$. Similarly, our proof of the general case will follow by interpreting
the coefficients of $A_\vv(q)$ as the dimensions of the sign
isotypical component of cohomology groups of a smooth generic quiver
variety $\calQ_{\tilde \vv}$ attached to an extended quiver
(see~\eqref{A-pol}).

Recently, Mozgovoy \cite{mozgovoy} proved Conjecture~\ref{kac} for
any dimension vector for quivers with at least one loop at each
vertex.  His approach uses Efimov's proof \cite{efimov} of a
conjecture of Kontsevich--Soibelman \cite{kontsevich-soibelman2} which
implies positivity for certain refined DT-invariants of symmetric
quivers with no potential.

The goal of Kontsevich--Soibelman's theory is to attach refined (or
motivic, or quantum) Donaldson--Thomas invariants (or DT-invariants
for short) to Calabi--Yau 3-folds $X$. The invariants should only
depend on the derived category of coherent sheaves on $X$ and some
extra data; this raises the possibility of defining DT-invariants for
certain Calabi-Yau $3$-categories which share the formal properties of
the geometric situation, but are algebraically easier to study. The
simplest of such examples are the Calabi-Yau $3$-categories attached
to quivers (symmetric or not) with no potential (c.f. \cite{ginzburg}).

Denote by $\overline{\Gamma}=(I,\overline{\Omega})$ the double quiver,
that is $\overline{\Omega}=\Omega\coprod \Omega^{\opp}$, where
$\Omega^{\opp}$ is obtained by reversing all edges in $\Omega$.  The
refined DT-invariants  of $\overline{\Gamma}$ (a slight
renormalization of those introduced by Kontsevich and Soibelman
\cite{kontsevich-soibelman1}) are defined by the following
combinatorial construction. For $\vv=(v_1,\ldots,v_r)\in \N^r$ let
$$
\delta(\vv):=\sum_{i=1}^rv_i, \qquad
\gamma(\vv):=\sum_{i=1}^rv_i^2-
\sum_{i\rightarrow j\in \Omega}v_iv_j .
$$
Then 
\begin{equation}
\label{DT-inv-defn}
\sum_{\v\in \N^r\backslash\{0\}}
\DT_\v(q)\,(-1)^{\delta(\v)} T^\v 
:=  (q-1)\Log \sum_{\v\in
  \N^r}\frac{(-1)^{\delta(\v)}  
  q^{-\frac 12(\gamma(\v)+\delta(\v))}}{\prod_{i=1}^r (1-q^{-1})\cdots
  (1-q^{-v_i}) }\,T^\v
\end{equation}
It was proved
in \cite{kontsevich-soibelman2} that $\DT_\vv(q)\in \Z[q,q^{-1}]$. In
fact, as a consequence of Efimov's proof \cite{efimov} of
\cite[Conjecture 1]{kontsevich-soibelman2}, $\DT_\vv(q)$ actually has
non-negative coefficients. We will give an alternative proof of this
in~\eqref{even} by interpreting its coefficients as dimensions of
cohomology groups of an associated quiver variety.

 \begin{remark}
\label{parity}
  We should stress that we have restricted to double quivers for the
  benefit of exposition; our results extend easily to any symmetric
  quiver. We outline how to treat the general case in~\S\ref{DT2}.
 \end{remark}

The technical starting point in this paper is a common generalization
of \eqref{DT-inv-defn} and Hua's formula \eqref{hua}. Namely we
consider 
\beq \label{maingen}\bH(\x_1,\dots,\x_r;q):=(q-1)\Log\left(
  \sum_{\pi=(\pi^1,\dots,\pi^r) \in \calP^r} \frac{\prod_{i\rightarrow
      j\in \Omega} q^{\langle \pi^i,\pi^j\rangle}} {\prod_{i\in I}
    q^{\langle\pi^i,\pi^i\rangle}
    \prod_{k}\prod_{j=1}^{m_k(\pi^i)}(1-q^{-j}) } \prod_{i=1}^r
  \tilde{H}_{\pi^i}(\x_i;q)\right), \eeq 
where $\x_i=(x_{i,1},x_{i,2},\dots)$ is a set of infinitely many independent
variables, and $\tilde{H}_{\pi^i}(\x_i;q)$ denote the (transformed)
Hall-Littlewood polynomial (see \cite[\S 2.3.2]{aha}), which is a
symmetric function in $\x_i$ and polynomial in~$q$.  From
\eqref{maingen} we can extract many rational functions in $q$ just by
pairing against other symmetric functions.  For a multi-partition
$\muhat=(\mu^1,\dots,\mu^r)\in \calP^r$ we let
$s_\muhat:=s_{\mu^1}(\x_1)\cdots s_{\mu^r}(\x_{r})$, where
$s_{\mu^i}(\x_i)$ is the Schur symmetric function attached to the
partition $\mu^i$. Define \beq \label{hum}\Hs_\muhat(q):=\left\langle
  \bH(\x_1,\dots,\x_r;q), s_\muhat\right\rangle, \eeq where
$\langle,\rangle$ is the natural extension to $r$ variables of the
Hall pairing on symmetric functions, defined by declaring the basis
$s_\muhat$ orthonormal (see \cite[(2.3.1)]{aha}). Note that a priori
the $\Hs_\muhat$ are rational functions in $q$; we will prove that
they are actually polynomials with non-negative integer coefficients.
More precisely, we will show (Theorem~\ref{main} (i)) that the
coefficients of $\Hs_\muhat(q)$ are the dimensions of certain
cohomology groups.

Our first formal observation is the
following
\begin{proposition}\label{formal} 
  For any $\vv=(v_1,\ldots,v_r)\in \N^r$ we have

(i)
 \beq
  A_\vv(q)=\Hs_{\vv^1}(q),\eeq where $\vv^1:=((v_1),\dots,(v_r))\in
  \calP^r$ 
 and 

(ii)
\beq \DT_\vv(q)=\Hs_{1^\vv}(q),\label{1.7}\eeq where
  $1^\vv:=((1^{v_1}),\dots,(1^{v_r}))\in \calP^r$.
\end{proposition}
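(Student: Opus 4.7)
The plan is to prove both parts by specializing the symmetric function variables $\x_1,\dots,\x_r$ in \eqref{maingen} and exploiting the Schur expansion $\bH=\sum_\muhat\Hs_\muhat(q)\,s_\muhat$, so that a suitable specialization of the basis $s_\muhat$ extracts the desired coefficient on the left, while the modified Hall--Littlewood factors on the right collapse to monomials in $T_i$.

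For (i), I would specialize $\x_i\mapsto(T_i,0,0,\dots)$. In one variable, $s_\mu(T_i,0,\dots)=T_i^{|\mu|}$ when $\mu$ is a one-row partition and vanishes otherwise, so $\bH|_{\mathrm{spec}}=\sum_\vv \Hs_{\vv^1}(q)\,T^\vv$. Moreover, $\tilde H_\lambda(T_i,0,\dots;q)=T_i^{|\lambda|}$: an immediate consequence of the Schur expansion $\tilde H_\lambda=\sum_\mu\tilde K_{\mu\lambda}(q)\,s_\mu$ together with $\tilde K_{(n),\lambda}(q)=1$, which is built into the normalization used in \cite[\S 2.3.2]{aha}. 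Substituting these into \eqref{maingen} converts the argument of $\Log$ into that of Hua's formula \eqref{hua}, and matching coefficients of $T^\vv$ gives $\Hs_{\vv^1}(q)=A_\vv(q)$.

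For (ii), I would apply the dual plethystic specialization $p_k(\x_i)\mapsto(-1)^{k-1}T_i^k$ (equivalently $\x_i\mapsto\varepsilon T_i$ in $\lambda$-ring notation). By the classical duality $\omega s_\mu=s_{\mu'}$, this sends $s_\mu$ to $T_i^{|\mu|}$ if $\mu=1^{|\mu|}$ and to zero otherwise, so that $\bH|_{\mathrm{spec}}=\sum_\vv \Hs_{1^\vv}(q)\,T^\vv$. Correspondingly, $\tilde H_\lambda|_{\mathrm{spec}}=\tilde K_{1^{|\lambda|},\lambda}(q)\,T_i^{|\lambda|}$, which vanishes unless $\lambda=1^{|\lambda|}$ because $1^n$ is minimal in dominance order, and equals $q^{\binom{n}{2}}T_i^n$ when $\lambda=1^n$. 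The multi-partition sum in \eqref{maingen} therefore collapses to the diagonal $\pi^i=1^{v_i}$, and using $\langle 1^{v_i},1^{v_j}\rangle=v_iv_j$, $n(1^n)=\binom{n}{2}$, and the definitions of $\gamma(\vv)$ and $\delta(\vv)$, an elementary rearrangement (after $T\to -T$ to absorb the signs $(-1)^{\delta(\vv)}$) identifies the specialized series with the argument of $\Log$ in \eqref{DT-inv-defn}, yielding $\Hs_{1^\vv}(q)=\DT_\vv(q)$.

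The only nontrivial step is the final identification of specialized series in (ii), which requires careful tracking of the signs, the half-integer powers of $q$, and the factor $q^{\binom{v_i}{2}}$ coming from $\tilde K_{1^{v_i},1^{v_i}}(q)$. Part (i) by contrast requires no such bookkeeping and is essentially immediate from Hua's formula.
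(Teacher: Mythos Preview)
Your argument is correct and, for part~(ii), is essentially the paper's own: your dual specialization $p_k\mapsto(-1)^{k-1}T_i^k$ differs from the top-degree map $[\,\cdot\,]$ of \S\ref{u} only by the sign twist $T\to -T$ you already invoke, and the paper packages the same facts about $\tilde H_\lambda$ and the commutation with $\Log$ as Propositions~\ref{u-specializ-Log} and~\ref{HL-u-specializ}. One caution: the step you describe as routine bookkeeping hides the fact that $\omega$ alone does not commute with the Adams operations $\psi_d$, so the commutation of your specialization with $\Log$ genuinely relies on the substitution $T\to -T$ (equivalently, on viewing the target as the $\lambda$-ring with $\psi_d(T_i)=(-1)^{d-1}T_i^d$). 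The paper isolates precisely this point as Proposition~\ref{u-specializ-Log}(ii).

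For part~(i) your route is more direct than the paper's. The paper deduces~(i) from Theorem~\ref{theoaha2}, whose proof passes through a count of quiver representations equipped with partial flags and yields the stronger statement $\langle\bH,h_\muhat\rangle=A_\muhat(q)$ for \emph{every} $\muhat$. Your single-variable specialization bypasses this entirely, collapsing~\eqref{maingen} directly onto Hua's formula~\eqref{hua}; this suffices because for $\muhat=\vv^1$ the added legs have length zero and $\Gamma_\muhat=\Gamma$. The paper's longer route is not wasted, however: the full strength of Theorem~\ref{theoaha2} is needed again in the proof of Theorem~\ref{main}(ii) via the triangular relations~\eqref{form5}.
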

We will prove
  Part (i) as a consequence of Theorem \ref{theoaha2} since
  $h_{\v^1}=s_{\v^1}$ and part (ii) is a special case of
  Proposition~\ref{propDT}.

Fix a non-zero multi-partition $\muhat\in \P^r$ and let
$\vv=|\muhat|:=(|\mu^1|,\ldots,|\mu^r|)$. Associated to the pair
$(\Gamma,\vv)$ we construct a new quiver by attaching a leg of length
$v_i-1$ at the vertex $i$ of $\Gamma$ where $v_i:=|\mu^i|$.  We denote it by
$\tilde{\Gamma}_\vv=(\tilde{I}_\vv, \tilde{\Omega}_\vv)$.  We extend
the dimension vector $\vv: I\to \N$ to $\tilde{\vv}:\tilde{I}_\vv\to
\N$ by placing decreasing dimensions $v_i-1,v_i-2,\dots,1$ at the
extra leg starting with $v_i$ at the original vertex $i$. We also
consider the subgroup $W_\vv<W$ of the Weyl group of the quiver
generated by the reflections at the extra vertices $\tilde{I}\setminus
I$. We may identify $W_\vv$ with $S_{v_1}\times\dots \times S_{v_r}$,
the Weyl group of the group $\GL_\vv:=\GL_{v_1}\times\dots\times
\GL_{v_r}$.

Because by construction $\tilde{\vv}$ is indivisible we can define the
corresponding smooth {\em generic} complex quiver variety
$\calQ_{\tilde{\vv}}$. Note that $\tilde{\vv}$ is left invariant by
$W_\vv$ and thus $W_\vv$ acts on $H_c^*(\calQ_{\tilde{\vv}};\C)$ by
work of Nakajima \cite{nakajima,nakajima2}, Lusztig \cite{Lusztig}, Maffei
\cite{maffei} and Crawley-Boevey--Holland \cite{crawley-holland}. We denote by $\chi^\muhat=\chi^{\mu^1}\cdots
\chi^{\mu^r}:W_\v\to \C^\times$ the product of the irreducible
characters $\chi^{\mu_i}$ of the symmetric groups $S_{v_i}$ in the
notation of \cite[\S I.7]{macdonald}. In particular, $\chi^{(v_i)}$ is
the trivial character and $\chi^{(1^{v_i})}$ is the sign character
$\epsilon_i:S_{v_i}\to \{\pm 1\}<\C^\times$.  If
$\muhat^\prime:=((\mu^1)^\prime,\dots,(\mu^r)^\prime)$, where
$(\mu^i)^\prime$ is the dual partition of $\mu^i$, then
$\chi^{\muhat^\prime}=\epsilon\chi^\muhat$ with
$\epsilon:=\epsilon_1\cdots\epsilon_r$ the sign character of
$W_\v$. 

We may decompose the representation of $W_\vv$ on
$H_c^*(\calQ_{\tilde{\vv}};\C)$ into its isotypical components
$$
H^*_c(\calQ_{\tilde{\vv}};\C)\cong
\bigoplus_{\muhat \in
  \calP_\vv}H^*_c(\calQ_{\tilde{\vv}};\C)_{\chi^\muhat},
$$
where $\calP_\vv$ denotes the set of multi-partitions
$\muhat=(\mu^1,\dots,\mu^r)$ of size $\vv=(v_1,\dots,v_r)$.

More generally, for a multi-partition
$\muhat=(\mu^1,\dots,\mu^r)\in\calP_\v$, with
$\mu^i=(\mu^i_1,\mu^i_2,\dots,\mu^i_{l_i})$ and $l_i$ the length of
$\mu^i$, denote by $\Gamma_\muhat$ the quiver obtained from
$(\Gamma,\muhat)$ by adding at each vertex $i\in I$ a leg with $l_i-1$
edges. We denote by $\v_\muhat$ the dimension vector of
$\Gamma_\muhat$ with coordinates
$v_i,v_i-\mu^i_1,v_i-\mu^i_1-\mu^i_2,\dots, \mu^i_{l_i}$ at the $i$-th
leg. 
Define
\begin{equation}
\label{dim-defn}
d_\muhat:=1-\tfrac12 {^t}\v_\muhat {\bf C_\muhat}\v_\muhat
\end{equation}
with ${\bf C}_\muhat$ the Cartan matrix of $\Gamma_\muhat$.
Notice that if $\muhat=(1^\v)$ then 
$\tilde{\Gamma}_\v=\Gamma_{1^\v}, \v_\muhat=\tilde \v$; 
 we will write $d_{\tilde \v}$ instead of $d_{1^\v}$.
The quiver variety $\calQ_{\tilde \v}$ is non-empty if and only if
$\tilde \v$ is a root of $\Gamma_{\tilde \v}$ in which case  it has
dimension $2d_{\tilde \v}$ \cite[Theorem 1.2]{CB}.

Our main geometric result is the following 
\begin{theorem} 
\label{main}
We have

(i)
$$
\Hs_\muhat(q)=\sum_i
  \dim\left(H^{2i}_c(\calQ_{\tilde{\vv}};\C)_{\epsilon\chi^\muhat}\right)
  q^{i-d_{\tilde{\vv}}}.
$$

(ii) $\Hs_\muhat(q)$ is non-zero if and only if $\v_\muhat$ is a root
of $\Gamma_\muhat$, in which case it is a monic polynomial of
degree~$d_\muhat$. Moreover, $\Hs_\muhat(q)=1$ if and only if
$\v_\muhat$ is a real root.
\label{maintheo}\end{theorem}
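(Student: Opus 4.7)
The plan is to identify $\Hs_\muhat(q)$ as the generating function of the $\epsilon\chi^\muhat$-isotypical component of $H^*_c(\calQ_{\tilde\vv};\C)$ via a $W_\vv$-equivariant point-count over $\F_q$, then match the count to the Hall-Littlewood side of (\ref{maingen}) by an arithmetic Fourier transform. The first preparatory input is that $\calQ_{\tilde\vv}$, being smooth (thanks to $\tilde\vv$ being indivisible and the parameter generic), is polynomial-count and has pure compactly supported cohomology; the polynomial count will come out of the character-sum computation below, while purity should follow from the symplectic/hyperk\"ahler structure. Granted purity, Katz-type arguments promote the Frobenius trace to Poincar\'e-polynomial information $W_\vv$-equivariantly, giving
\[
\sum_i\dim\bigl(H^{2i}_c(\calQ_{\tilde\vv};\C)_{\epsilon\chi^\muhat}\bigr)q^{i-d_{\tilde\vv}}
= \frac{q^{-d_{\tilde\vv}}}{|W_\vv|}\sum_{w\in W_\vv}\epsilon(w)\chi^\muhat(w)\,|\calQ_{\tilde\vv}(\F_q)^w|.
\]

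To compute the right-hand side, I would realize $\calQ_{\tilde\vv}$ as an affine moment-map fiber for the $\GL_{\tilde\vv}$-action on representations of the double $\overline{\tilde\Gamma}_\vv$, stratify by the $\GL_{v_i}(\F_q)$-conjugacy class of the residue matrix at each leg (labelled by a partition $\pi^i$ of $v_i$), and apply Fourier inversion along the moment map. Each stratum produces the factors appearing in (\ref{maingen}): centralizer denominators $\prod_k\prod_{j=1}^{m_k(\pi^i)}(1-q^{-j})$, an arrow contribution $q^{\langle\pi^i,\pi^j\rangle}$ for each $i\to j\in\Omega$, and a vertex contribution $q^{-\langle\pi^i,\pi^i\rangle}$. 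The $w$-twist at vertex $i$ couples the character $\chi^{\mu^i}$ to the Green function on $\GL_{v_i}(\F_q)$, and Green's identification of his polynomials with transformed Hall--Littlewood symmetric functions converts this coupling into the Schur pairing $\langle\tilde H_{\pi^i}(\x_i;q),s_{\mu^i}\rangle$. Summing over $\pihat\in\calP^r$ and passing from the stack count to the variety count by the $(q-1)\Log$ operator yields $\langle\bH(\x;q),s_\muhat\rangle=\Hs_\muhat(q)$, establishing part (i).

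Part (ii) follows from part (i) together with the root criterion of Crawley-Boevey: $\calQ_{\tilde\vv}$ is non-empty iff $\tilde\vv$ is a root of $\tilde\Gamma_\vv$, in which case $\dim_\C\calQ_{\tilde\vv}=2d_{\tilde\vv}$. A reflection-functor argument along the added legs identifies the $\epsilon\chi^\muhat$-isotypical component of $H^*_c(\calQ_{\tilde\vv})$ with the cohomology of a smaller smooth quiver variety of dimension $2d_\muhat$ associated to $(\Gamma_\muhat,\v_\muhat)$; this second variety is non-empty iff $\v_\muhat$ is a root of $\Gamma_\muhat$, its top cohomology carries a one-dimensional fundamental class (yielding monicity of degree $d_\muhat$), and when $\v_\muhat$ is a real root it collapses to a single point so $\Hs_\muhat=1$. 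The main obstacle is the Fourier-transform computation of the second paragraph in $W_\vv$-equivariant form: tracking the sign twist $\epsilon$ (accounting for $\chi^{\muhat'}=\epsilon\chi^\muhat$), the dimension shift by $d_{\tilde\vv}$, and matching the $\chi^{\mu^i}$-twisted Green-function sums to the Hall-Littlewood/Schur pairings; the secondary technical input is the purity of $H^*_c(\calQ_{\tilde\vv})$, without which the arithmetic count only recovers the $E$-polynomial rather than the full Poincar\'e polynomial.
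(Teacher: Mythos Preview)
Your plan for part~(i) is essentially the paper's: realize $\calQ_{\tilde\vv}$ as $\mu_\vv^{-1}(\calO)/\!/\GL_\vv$ for a generic regular semisimple orbit $\calO\subset\gl_\vv$, use purity and polynomial-count to pass from Frobenius traces to Poincar\'e data, and compute the $w$-twisted count by a Fourier transform along $\mu_\vv$. Two points of imprecision are worth flagging. First, what you write as $|\calQ_{\tilde\vv}(\F_q)^w|$ is not a $w$-fixed-point set; the paper instead counts $\F_q$-points of a \emph{different} variety $\calQ_{\tilde\vv}^w$ attached to an orbit of Frobenius-type $w$, and relates this to a $wF$-twisted count on a single model $\M_\sigma$. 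Second, the Fourier computation does not pair directly with Schur functions: one first obtains $\langle\bH,p_{\lambdahat}\rangle$ (power sums, via Green functions evaluated at the cycle type $\lambdahat$ of $w$), and only afterwards converts to $\langle\bH,s_\muhat\rangle$ using the character expansion $s_\muhat=\sum_\lambdahat \chi^\muhat_\lambdahat p_\lambdahat$. The sign $\epsilon(w)$ appears in the character-sum step, not as an afterthought.

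For part~(ii) your approach diverges from the paper and has a genuine gap. You assert a reflection-functor identification of the $\epsilon\chi^\muhat$-isotypical piece of $H^*_c(\calQ_{\tilde\vv})$ with the cohomology of a \emph{smooth} quiver variety for $(\Gamma_\muhat,\vv_\muhat)$. In general no such smooth variety exists: $\vv_\muhat$ need not be indivisible, and the natural candidate $\calQ_{\vv_\muhat}$ (cf.\ the paper's Remark~\ref{remark}) is singular and requires intersection cohomology even to state the analogue. The paper avoids geometry entirely here. It proves the separate identity $\langle\bH,h_\muhat\rangle=A_\muhat(q)$ (the Kac polynomial of $(\Gamma_\muhat,\vv_\muhat)$), then uses the unitriangular Kostka change of basis between $\{h_\muhat\}$ and $\{s_\muhat\}$ to write
\[
A_{\lambdahat}(q)=\sum_{\muhat\unrhd\lambdahat}K'_{\lambdahat\muhat}\Hs_\muhat(q),
\qquad
\Hs_\muhat(q)=\sum_{\lambdahat\unrhd\muhat}K^*_{\muhat\lambdahat}A_{\lambdahat}(q).
\]
Since $A_\muhat$ is monic of degree $d_\muhat$ when $\vv_\muhat$ is a root (Kac), $d_\betahat<d_\alphahat$ whenever $\alphahat\lhd\betahat$, and $K_{\muhat\muhat}=K^*_{\muhat\muhat}=1$, the second formula gives monicity and degree of $\Hs_\muhat$; the first, combined with non-negativity of $\Hs_\muhat$ from part~(i) and of Kostka numbers, gives the converse (if $\Hs_\muhat\neq0$ then $A_\muhat\neq0$, so $\vv_\muhat$ is a root). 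This triangularity argument is the missing ingredient in your sketch.
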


\begin{remark}
1.  Another way to phrase Theorem~\ref{main} (i) is as follows. Let
  $V_{\v, i}$ be the representation $H_c^i(\calQ_\tv;\C)$ of $W_\v$
  tensored with the sign representation and consider the graded
  representation $V:=\bigoplus_{\v,i} V_{\v,i}$. Then the
  Frobenius-Hilbert series of $V$ is $\bH$. In other words,
$$
\sum_{\v,i}\ch(V_{\v,i})q^{i-d_{\tilde v}}= \bH(\x_1,\dots,\x_r;q),
$$
where $\ch$ is the Frobenius characteristic map (naturally extended to
the product of symmetric groups $W_\v$).  

2. Theorem~\ref{main} (ii) provides a criterion (in terms of root
systems) for the appearance of an irreducible character of $W_\v$ in
$\bigoplus_i V_{\v,i}$.
\end{remark}

In combination with Proposition~\ref{formal} Theorem~\ref{main}
implies the following.
\begin{corollary} 
\label{pols-corollary}
We have

(i)
\begin{equation}
\label{A-pol}
A_\vv(q)=\sum_i
  \dim\left(H^{2i}_c(\calQ_{\tilde{\vv}};\C)_{\epsilon}\right)
  q^{i-d_{\tilde{\vv}}},
\end{equation}
 and
\begin{equation}
\label{even}
\DT_\vv(q)=\sum_i
  \dim\left(H^{2i}_c(\calQ_{\tilde{\vv}};\C)^{W_\vv}\right)
  q^{i-d_{\tilde{\vv}}},
\end{equation}

(ii) In particular, $A_\vv(q)$ and $\DT_\vv(q)$ have non-negative
integer coefficients.

(iii) Conjecture~\ref{kac} holds for any quiver and dimension vector
$\vv$.

(iv) The polynomial $\DT_\v(q)$ is non-zero if and only if $\tv$ is a
root of $\tilde{\Gamma}_\v$, in which case it is monic of
degree~$d_\tv$. Moreover, $\DT_\v(q)=1$ if and only if $\tv$ is a real
root.
\end{corollary}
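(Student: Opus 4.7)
The plan is to deduce all four parts by combining Proposition \ref{formal} with Theorem \ref{main}, after a short character-theoretic verification; no additional geometric input is needed beyond what the main theorem already provides.

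For part (i), I would substitute the two distinguished multi-partitions $\muhat = \v^1$ and $\muhat = 1^\v$ into Theorem \ref{main}(i). Proposition \ref{formal} rewrites $\Hs_\muhat(q)$ as $A_\v(q)$ and $\DT_\v(q)$ respectively, so the only nontrivial step is identifying the isotypical component that appears on the right. For $\muhat = \v^1$ each factor $\chi^{(v_i)}$ is the trivial character of $S_{v_i}$, hence $\chi^{\v^1}$ is the trivial character of $W_\v$ and $\epsilon\chi^{\v^1} = \epsilon$; this yields \eqref{A-pol}. For $\muhat = 1^\v$ each factor $\chi^{(1^{v_i})}$ equals the sign character $\epsilon_i$, hence $\chi^{1^\v} = \epsilon$ and $\epsilon\chi^{1^\v}$ is trivial, so the corresponding isotypical component is exactly the $W_\v$-invariant subspace, which yields \eqref{even}.

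Part (ii) follows immediately from (i): the coefficients on the right-hand sides of \eqref{A-pol} and \eqref{even} are dimensions of complex vector spaces and hence are non-negative integers. The only subtlety is to check that no negative powers of $q$ appear, but this is forced since $A_\v(q)$ and $\DT_\v(q)$ are already known to be genuine Laurent polynomials in $q$ with integer coefficients (Kac \cite{kac2} and Kontsevich--Soibelman \cite{kontsevich-soibelman2} respectively); the low-degree isotypical components must therefore vanish. Part (iii) is then Kac's Conjecture \ref{kac} verbatim.

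Finally, for part (iv), I would apply Theorem \ref{main}(ii) to $\muhat = 1^\v$ and invoke the identifications $\v_{1^\v} = \tilde\v$, $\Gamma_{1^\v} = \tilde\Gamma_\v$, and $d_{1^\v} = d_{\tilde\v}$ recorded immediately before the statement of the theorem. Combined with $\DT_\v(q) = \Hs_{1^\v}(q)$ from Proposition \ref{formal}(ii), each of the four clauses of (iv) becomes a literal translation of the corresponding clause in Theorem \ref{main}(ii). All of the substance resides in Theorem \ref{main}; I do not anticipate any real obstacle in the corollary itself, beyond the short character identifications in part (i).
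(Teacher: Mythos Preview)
Your proposal is correct and follows exactly the same route as the paper, which simply states that Corollary~\ref{pols-corollary} follows by combining Proposition~\ref{formal} with Theorem~\ref{main}; the character identifications you spell out for $\muhat=\v^1$ and $\muhat=1^\v$ are precisely the content of that combination. Your digression in part (ii) about ruling out negative powers of $q$ is unnecessary: polynomiality of $\Hs_\muhat(q)$ is already part of Theorem~\ref{main} (it is established in the proof via the vanishing of cohomology below middle degree for the affine variety $\calQ_{\tilde\v}$), so no separate argument is needed here.
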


\begin{remark} 
  1. In the case when the dimension vector $\v$ is indivisible, we
  could have proved Theorem~\ref{main} (i) using the ideas of
  \cite{letellier2} based on the theory of perverse sheaves. For more
  details see Remark~\ref{remark}.

  2. As mentioned above, the fact that $\DT_\vv(q)$ has non-negative
  coefficients also follows from \cite[Conjecture
  1]{kontsevich-soibelman2} proved by Efimov~\cite{efimov}.

3. Mozgovoy in \cite{mozgovoy2} shows that $A_\vv(q)$ can also be
interpreted as the refined DT-invariant of an associated quiver
$\hat{\Gamma}$ with a non-trivial potential. It raises the question of
what is the meaning of our more general polynomials $\Hs_\muhat(q)$ in
refined DT-theory.

4. For a star-shaped quiver the series~\eqref{maingen} is shown
in~\cite{aha} to be the pure part (the specialization $(q,t)\mapsto
(0,q)$) of a series in two variables $q,t$. We expect that there is an
analogous $t$-deformation of~\eqref{maingen} for any quiver. Under
this deformation the Hall-Littlewood polynomials would be replaced by
the Macdonald polynomials; the challenge is to $t$-deform the rest of
the terms. Conjecturally, the geometrical meaning of such a $(q,t)$
formula should involve the mixed Hodge polynomials of multiplicative
quiver varieties of Crawley-Boevey and
Shaw~\cite{crawley-boevey-shaw}. We also expect that the natural Weyl
group action on the cohomology of such multiplicative quiver varieties
extends, at least in some special cases, to representations of
rational Cherednik algebras.

5. Finally, we mention that in some cases such $(q,t)$-formulas were found recently
by Chuang-Diaconescu-Pan \cite{diaconescu-etal} to conjecturally describe
refined BPS invariants and, in particular, refined DT-invariants of
local curve Calabi-Yau $3$-fold geometries.

\end{remark}

\paragraph{Acknowledgements.} We would like to thank Sergey Mozgovoy
for explaining his papers \cite{mozgovoy,mozgovoy2} and for useful comments.
We thank 
William Crawley-Boevey, Bernard Keller,  Maxim Kontsevich, Andrea Maffei, Hiraku Nakajima, Philippe Satg\' e, Yan Soibelman and Bal\'azs Szendr{\H o}i 
for useful comments.  The first author
thanks the Royal Society for funding his research 2005-2012 in the
form of a Royal Society University Research Fellowship as well as the
Mathematical Institute and Wadham College in Oxford for a very
productive environment. The second author is supported by Agence
Nationale de la Recherche grant ANR-09-JCJC-0102-01. He would like
also to thank the Mathematical Institute and Wadham College in Oxford
where some of this work was done. The third author is supported by the
NSF grant DMS-1101484 and a Research Scholarship from the Clay
Mathematical Institute.
 
\section{Proof of Theorem \ref{maintheo}}\label{proofmain}

\subsection{The quiver varieties $\calQ_\tv$}\label{supernova}

Let $\Gamma, I, \Omega$ be as in the introduction. Let $\K$ be any
algebraically closed field. For a dimension vector $\v=(v_i)_{i\in
  I}\in\N^I$ put
$$
\K^\v:=\bigoplus_{i\in I}\K^{v_i},\hspace{.5cm}\GL_\v:=\prod_{i\in
  I}\GL_{v_i}(\K),\hspace{.5cm}\gl_\v:=\bigoplus_{i\in I}\gl_{v_i}(\K).
$$
By a graded subspace of $V\subseteq \K^\v$ we will mean a subspace of
the form
$$
V=\bigoplus_{i\in I} V_i, \qquad \qquad V_i\subseteq \K^{v_i}.
$$

The group $\GL_\v$ acts on $\gl_\v$ by conjugation. For an element
$X=(X_i)_{i\in I}\in\gl_\v$ we put $\Trace(X):=\sum_{i\in
  I}\Trace(X_i)$. We denote by $T_\v$ the maximal torus of
$\GL_\v$ whose elements are of the form $(g_i)_{i\in I}$ with $g_i$ a
diagonal matrix for each $i\in I$. The Weyl group
$W_\v:=N_{\GL_\v}(T_\v)/T_\v$ of $\GL_\v$ with respect to $T_\v$ is
isomorphic to $\prod_{i\in I}S_{v_i}$ where $S_v$ denotes the
symmetric group in $v$ letters. Recall that a semisimple element $X\in
\gl_\v$ is \emph{regular} if $C_{\GL_\v}(X)$ is a maximal torus of $\GL_\v$, i.e., the
eigenvalues of the coordinates of $X$ are all with multiplicity $1$.

We say that an adjoint orbit $\calO$ of $\gl_\v$ is \emph{generic}
if $\Trace(X)=0$ and if for any graded 
subspace $V\subseteq \K^\v$
stable by some $X\in \calO$ such that
$$
\Trace\left(X|_V\right)=0
$$
then either $V=0$ or $V=\K^\v$. We fix such a generic regular semisimple
adjoint orbit $\calO\subset\gl_\v$ (we can prove as in  \cite[\S 2.2]{aha} that such a choice is always possible).

Let $\overline{\Gamma}$ be the \emph{double quiver} of $\Gamma$ ;
namely, $\overline{\Gamma}$ has the same vertices as $\Gamma$ but for
each arrow $\gamma\in \Omega$ going from $i$ to $j$ we add a new arrow
$\gamma^*$ going from $j$ to $i$. We denote by
$\overline{\Omega}=\{\gamma,\gamma^*\;|\;\gamma\in \Omega\}$ the set
of arrows of $\overline{\Gamma}$. Consider the space
$$
\Rep_\K\left(\overline{\Gamma},\v\right):=\bigoplus_{i\rightarrow
  j\in \overline{\Omega}}{\rm Mat}_{v_j,v_i}(\K)$$ of representations
of $\overline{\Gamma}$ with dimension $\v$. Recall that $\GL_\v$ acts
on $\Rep_\K\left(\overline{\Gamma},\v\right)$ as

$$
(g\cdot\varphi)_{i\rightarrow j}=g_j\varphi_{i\rightarrow j}g_i^{-1}
$$
for any $g=(g_i)_{i\in I}\in\GL_\v$,
$\varphi=(\varphi_\gamma)_{\gamma\in\overline{\Omega}}\in{\rm
  Rep}_\K\left(\overline{\Gamma},\v\right)$ and any arrow
$i\rightarrow j\in\overline{\Omega}$. 

Let $\tGamma{_\v}$ on vertex set $\tI{_\v}$ be the quiver  obtained from $(\Gamma,\v)$ by adding at each vertex $i\in I$ a leg
 of length $v_i-1$ with the edges all oriented toward the vertex $i$. Define
  $\tv\in \N^{\tI{_\v}}$ as the dimension vector with coordinate $v_i$
  at $i\in I\subset \tI{_\v}$ and with coordinates
  $(v_i-1,v_i-2,\dots,1)$ on the leg attached to the vertex $i\in
  I$.

  Let $\calQ_\tv$ be the quiver variety over $\K$ attached to the quiver $\tGamma$ and
  parameter set  defined from the eigenvalues
  of $\calO$ (see \cite{aha} and the reference therein). 

  Concretely, define the moment map \beq\mu_{\mathbf{v}}:{\rm
    Rep}_{\K}\left(\overline{\Gamma},\mathbf{v}\right)\rightarrow
  \gl_\v^0 \\
  (x_{\gamma})_{\gamma\in\overline{\Omega}}
  \mapsto\sum_{\gamma\in\Omega}[x_{\gamma},x_{\gamma^*}],\label{moment}
  \eeq where
$$
\gl_\v^0:=\left.\left\{X\in\gl_\v\,\right|\,{\rm
    Tr}\,(X)=0\right\}.
$$
Then
$\calQ_\tv$ 
is the affine GIT quotient
\begin{eqnarray}\label{supernovadefined}
\mu_\v^{-1}\left(\calO\right)/\!/\GL_\v:={\rm
  Spec}\,\left(\K[\mu_\v^{-1}(\calO)]^{\GL_\v}\right). 
\end{eqnarray}
Note that the one-dimensional torus $\bG_m$ embeds naturally in
$\GL_\v$ as $t\mapsto (t\cdot I_{v_i})_{i\in I}$ where $I_v$ is the
identity matrix of $\GL_v$. The action of $\GL_\v$ on
$\mu_\v^{-1}\left(\calO\right)$ factorizes through an action of
$G_\v:=\GL_\v/\bG_m$.

We have the following theorem whose proof is similar to that of
\cite[Theorem 2.2.4]{aha}. 

\begin{theorem}
 (i) The variety $\calQ_\tv$ is non-singular and the
  quotient map $\mu^{-1}_\v(\calO)\rightarrow\calQ_\tv$ is a principal
  $G_\v$-bundle in the \'etale topology.

  \noindent (ii) The odd degree   cohomology of $\calQ_\tv$ vanishes.

\label{odd}
\end{theorem}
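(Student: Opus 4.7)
I would adapt the strategy of \cite[Theorem 2.2.4]{aha}. For Part (i), the cornerstone is the following key lemma, which I would establish first: if $V = \bigoplus_{i \in I} V_i \subseteq \K^\v$ is a graded subspace stable under all components of some $\varphi \in \mu_\v^{-1}(\calO)$, then $V$ is also $\mu_\v(\varphi)$-stable with $\Trace(\mu_\v(\varphi)|_V) = 0$, because $\mu_\v(\varphi)|_V = \sum_{\gamma \in \Omega}[x_\gamma|_V, x_{\gamma^*}|_V]$ is a sum of commutators of endomorphisms of $V$. Since $\mu_\v(\varphi) \in \calO$ and $\calO$ is generic, this forces $V = 0$ or $V = \K^\v$. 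Consequently, any $g \in \GL_\v$ stabilizing $\varphi$ must have a single generalized eigenvalue (its generalized eigenspaces are graded and $\varphi$-stable) and must be semisimple (the kernel of its nilpotent part is likewise graded and $\varphi$-stable), so the stabilizer equals the central $\bG_m$; hence $G_\v$ acts freely on $\mu_\v^{-1}(\calO)$. Dually, the image of $d\mu_\v$ at $\varphi$ is the annihilator of the infinitesimal stabilizer in $\gl_\v$, which equals all of $\gl_\v^0$; so $\mu_\v$ is a submersion along $\mu_\v^{-1}(\calO)$ and this locus is smooth. Luna's étale slice theorem then packages the smooth free action into a principal $G_\v$-bundle in the étale topology, and smoothness descends to $\calQ_\tv$.

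For Part (ii), the natural route --- following \cite{aha} --- is arithmetic. I would compute $|\calQ_\tv(\F_q)|$ by the bundle identity $|\mu_\v^{-1}(\calO)(\F_q)|/|G_\v(\F_q)|$, express the numerator via Fourier inversion on $\gl_\v(\F_q)$, and extract a polynomial in $q$ from the resulting character sum. Granted the purity of the mixed Hodge structure on $H^*_c(\calQ_\tv)$ --- which is standard for smooth generic Nakajima quiver varieties with indivisible dimension vector $\tv$ --- Katz's polynomial-count criterion (as used in \cite{aha}) identifies the $E$-polynomial with the Poincaré polynomial evaluated at $q$, and in particular forces the cohomology to be concentrated in even degrees.

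The principal obstacle is rigorously controlling Part (ii): one must establish purity of $H^*_c(\calQ_\tv)$ and verify that the generic orbit $\calO \subset \gl_\v$ for $\Gamma$ corresponds to a generic parameter in the sense of Nakajima for the extended quiver $\tGamma_\v$, so that $\calQ_\tv$ fits into the framework where purity combined with polynomial count implies the odd vanishing. Both ingredients are standard in the literature on generic quiver varieties with indivisible dimension vector, but need care in handling the extra legs of $\tGamma_\v$; an alternative, if purity proves difficult to cite cleanly, would be to construct a $\bG_m$-action on $\calQ_\tv$ whose Białynicki--Birula decomposition has only even-dimensional strata, which would give the same conclusion.
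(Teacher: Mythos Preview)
Your proposal is correct and is precisely the argument the paper has in mind: the paper gives no proof here but simply refers to \cite[Theorem~2.2.4]{aha}, and what you have written is a faithful outline of that argument (key lemma from genericity $\Rightarrow$ simplicity of representations in $\mu_\v^{-1}(\calO)$ $\Rightarrow$ free $G_\v$-action and submersivity of $\mu_\v$ $\Rightarrow$ smoothness and \'etale principal bundle via Luna; then polynomial count plus purity $\Rightarrow$ odd vanishing). Your caveats about purity for (ii) are well placed, but the paper itself later invokes exactly the ingredients you name (see the discussion after~\eqref{w} citing \cite{crawley-etal}), so nothing beyond what is already available in the literature is needed.
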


\subsection{Weyl group action}
\label{Weyl-gp-action}

\subsubsection{Weyl group action}\label{W1}

 Let $\K$ denote an arbitrary algebraically closed field as before. Let $\ell$ be a prime different from the characteristic ${\rm char}\,\K$ of $\K$, and for an algebraic variety over $\K$ denote by $H_c^i(X;\overline{\Q}_\ell)$ the compactly supported $\ell$-adic cohomology.

Denote by $\t_\v^{\gen}$ the generic regular semisimple elements of
the Lie algebra $\t_\v$ of $T_\v$. For $\sigma\in\t_\v^{\gen}$ define
$$
\M_\sigma:=\left\{(\varphi,X,gT_\v)\in{\rm
    Rep}_\K(\Gamma,\v)\times\gl_\v\times(\GL_\v/T_\v)\;|\;
  g^{-1}Xg=\sigma,\, \mu_\v(\varphi)=X\right\}/\!/\GL_\v 
 $$
 where $\GL_\v$ acts by
 $$g\cdot(\varphi,X,hT_\v)=(g\cdot\varphi,gXg^{-1},ghT_\v).$$ The
 following lemma is immediate. 
 
 \begin{lemma} The projection $(\varphi,X,gT_\v)\rightarrow\varphi$
   induces an isomorphism from $\M_\sigma$ onto the  
   quiver
   variety 
   associated to the adjoint orbit of $\sigma$ as in \eqref{supernovadefined}.
\label{Ms}
\end{lemma}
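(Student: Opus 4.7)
The plan is to show that, already before passing to the GIT quotient by $\GL_\v$, the forgetful map $(\varphi,X,gT_\v)\mapsto\varphi$ is a $\GL_\v$-equivariant isomorphism onto $\mu_\v^{-1}(\calO)$; the lemma then follows by taking GIT quotients on both sides.

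The starting observation is that since $\sigma\in\t_\v^{\gen}$ is regular semisimple, its centralizer in $\GL_\v$ is exactly $T_\v$, so the orbit map
\[
\pi\colon \GL_\v/T_\v\longrightarrow\calO,\qquad gT_\v\longmapsto g\sigma g^{-1},
\]
is a $\GL_\v$-equivariant isomorphism of varieties. Let $N\subset\Rep_\K(\overline{\Gamma},\v)\times\gl_\v\times(\GL_\v/T_\v)$ denote the scheme cut out by the conditions $g^{-1}Xg=\sigma$ and $\mu_\v(\varphi)=X$, so that $\M_\sigma=N/\!/\GL_\v$. The first condition lets me eliminate $X$, identifying $N$ with the subscheme $N'\subset\Rep_\K(\overline{\Gamma},\v)\times(\GL_\v/T_\v)$ defined by $\mu_\v(\varphi)=g\sigma g^{-1}$. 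In categorical terms, $N'$ is the fiber product of $\mu_\v\colon\mu_\v^{-1}(\calO)\to\calO$ with the orbit isomorphism $\pi$.

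Because $\pi$ is an isomorphism, projection onto the first factor $N'\to\mu_\v^{-1}(\calO)$ is also an isomorphism; its inverse sends $\varphi$ to $(\varphi,gT_\v)$, where $gT_\v=\pi^{-1}(\mu_\v(\varphi))$, and this inverse is a morphism of varieties because $\pi^{-1}$ is. Both maps are manifestly $\GL_\v$-equivariant with respect to the actions $g\cdot(\varphi,hT_\v)=(g\cdot\varphi,ghT_\v)$ on $N'$ and $g\cdot\varphi$ on $\mu_\v^{-1}(\calO)$. Passing to GIT quotients then yields
\[
\M_\sigma \;=\; N/\!/\GL_\v \;\cong\; \mu_\v^{-1}(\calO)/\!/\GL_\v,
\]
the quiver variety attached to the adjoint orbit of $\sigma$ as in~\eqref{supernovadefined}.

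I do not expect any serious obstacle: the only point to be checked with care is that the bijection $N'\to\mu_\v^{-1}(\calO)$ is a scheme-theoretic isomorphism, not merely bijective on points, and this is automatic once one knows that $\pi^{-1}\colon\calO\to\GL_\v/T_\v$ is a morphism. The $\GL_\v$-equivariance then passes to GIT quotients in the standard way.
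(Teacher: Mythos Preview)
Your argument is correct and is exactly the ``immediate'' verification the paper has in mind (the paper gives no proof, simply declaring the lemma immediate). The only substantive point is that regularity of $\sigma$ makes the orbit map $\GL_\v/T_\v\to\calO$ an isomorphism, which you identify and use correctly.
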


For $w\in W_\v$, denote by $w:\M_\sigma\rightarrow\M_{\dot{w}\sigma\dot{w}^{-1}}$ the isomorphism
$(\varphi,X,gT_\v)\mapsto(\varphi,X,g\dot{w}^{-1}T_\v)$. The first aim of this section is to prove the following theorem.

 \begin{theorem} There exists a prime $p_0$ such that if  ${\rm char}\,\K\geq
   p_0$ or if $\K=\C$, then for  any $\sigma, \tau\in \t_\v^{\gen}$ there exists a 
   canonical isomorphism $i_{\sigma,\tau}:H_c^i(\M_\sigma;\overline{\Q}_\ell)\rightarrow
   H_c^i(\M_\tau;\overline{\Q}_\ell)$ such that the following diagram commutes
 $$
 \xymatrix{H_c^i(\M_\tau;\overline{\Q}_\ell)\ar[rr]^{w^*}&&H_c^i(\M_{w^{-1}\tau w};\overline{\Q}_\ell)\\
   H_c^i(\M_\sigma;\overline{\Q}_\ell)\ar[u]^{i_{\sigma,\tau}}\ar[rr]^{w^*}&&H_c^i(\M_{w^{-1}\sigma w};\overline{\Q}_\ell)
   \ar[u]_{i_{w^{-1}\sigma w,w^{-1}\tau w}} }.
$$
Moreover for all $\sigma,\tau,\zeta\in\t_\v^{\gen}$ we have $i_{\sigma,\tau}\circ i_{\zeta,\sigma}=i_{\zeta,\tau}$.
\label{key1}\end{theorem}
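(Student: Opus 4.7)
The plan is to fit the $\M_\sigma$ into one smooth family over $\t_\v^{\gen}$, and to deduce the isomorphisms $i_{\sigma,\tau}$ from triviality of the monodromy of the associated higher direct image sheaves.

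First I would form
\[
Z := \bigl\{ (\varphi, X, gT_\v, \sigma)\in \Rep_\K(\Gamma,\v)\times \gl_\v \times (\GL_\v/T_\v)\times \t_\v^{\gen} : g^{-1}Xg=\sigma,\ \mu_\v(\varphi)=X\bigr\},
\]
set $\widetilde{\M}:=Z/\!/\GL_\v$, and let $\pi:\widetilde{\M}\to \t_\v^{\gen}$ be the projection to the last factor; by Lemma~\ref{Ms} the fibre of $\pi$ over $\sigma$ is canonically $\M_\sigma$. Adapting the proof of Theorem~\ref{odd} (itself modelled on \cite[Theorem~2.2.4]{aha}) to this relative setup—the generic/regular condition defining $\t_\v^{\gen}$ is open, hence propagates to a neighbourhood of every point—one checks that $\widetilde{\M}$ is smooth, the quotient map $Z\to\widetilde{\M}$ is an étale principal $G_\v$-bundle, and $\pi$ is smooth of constant relative dimension $2d_{\tilde\v}$.

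Smoothness of $\pi$ together with constancy of the fibrewise Euler characteristic force each $R^i\pi_!\overline{\Q}_\ell$ to be lisse on $\t_\v^{\gen}$. To upgrade from \emph{lisse} to \emph{constant}, I would combine fibrewise purity (Theorem~\ref{odd}(ii) gives vanishing of odd cohomology, so each $H_c^{2i}(\M_\sigma;\overline{\Q}_\ell)$ is pure of weight $2i$) with a rigidity argument: over $\C$ one invokes the standard hyperkähler rotation of Nakajima, which exhibits $\M_\sigma$ and $\M_\tau$ as diffeomorphic for any $\sigma,\tau\in\t_\v^{\gen}$, so the monodromy of $R^i\pi_!\overline{\Q}_\ell$ around any loop in the base acts trivially on cohomology; for $\K$ of sufficiently large positive characteristic, one spreads the entire construction out over a ring of integers and specializes from the $\C$-case, which is what determines the threshold $p_0$.

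Once $R^i\pi_!\overline{\Q}_\ell$ is known to be constant, set $V^i:=\Gamma(\t_\v^{\gen},\, R^i\pi_!\overline{\Q}_\ell)$; the stalk maps $V^i\xrightarrow{\sim} H_c^i(\M_\sigma;\overline{\Q}_\ell)$ are then canonical, and I define $i_{\sigma,\tau}$ as the composite $H_c^i(\M_\sigma;\overline{\Q}_\ell)\xleftarrow{\sim} V^i \xrightarrow{\sim} H_c^i(\M_\tau;\overline{\Q}_\ell)$, so the transitivity $i_{\sigma,\tau}\circ i_{\zeta,\sigma}=i_{\zeta,\tau}$ is tautological. The compatibility with $w\in W_\v$ comes from the obvious lift of the Weyl action to $Z$, namely $w\cdot(\varphi,X,gT_\v,\sigma):=(\varphi,X,g\dot w^{-1}T_\v,\,\dot w\sigma\dot w^{-1})$, which covers the $W_\v$-action on the base and restricts on fibres to the map $w:\M_\sigma\to\M_{\dot w\sigma\dot w^{-1}}$ from the text; the induced $W_\v$-equivariance of the canonical stalk trivializations yields the required commutative square. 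The main obstacle is the \emph{lisse}-to-\emph{constant} step: since $\t_\v^{\gen}$ is a hyperplane complement with nontrivial pure braid fundamental group, ruling out monodromy on cohomology is a genuine geometric input rather than a formality, and it is here that the hyperkähler rigidity argument in characteristic zero, together with the spreading-out step in positive characteristic, bears the weight of the proof.
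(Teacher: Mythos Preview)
Your approach is essentially the paper's: form the family $\pi:\M\to\t_\v^{\gen}$, prove that $R^i\pi_!\overline{\Q}_\ell$ is constant using hyperk\"ahler geometry over $\C$ (the paper cites Maffei \cite[Lemma~48]{maffei} rather than Nakajima), then spread out over $\Z$ and use Deligne's generic base change to transfer constancy to large positive characteristic; the definition of $i_{\sigma,\tau}$ via stalk maps of a constant sheaf, the transitivity, and the $W_\v$-compatibility are exactly as you describe.

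One point to correct: your intermediate claim that ``smoothness of $\pi$ together with constancy of the fibrewise Euler characteristic force each $R^i\pi_!\overline{\Q}_\ell$ to be lisse'' is not a standard fact for non-proper morphisms, and the paper does not take this route. Maffei's lemma gives constancy of the analytic sheaf $R^i\pi_!\Z/\ell^n\Z$ directly, so there is no need for a separate lisse step. (The paper in fact remarks that no purely algebraic proof of constancy is known, which is why the hyperk\"ahler input is essential and why the bound $p_0$ arises from spreading out.) Also, purity plays no role in the proof of this theorem---it enters only later, in the Frobenius trace computation---and vanishing of odd cohomology by itself does not yield purity, so that parenthetical should be dropped.
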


Before writing the proof let us explain the rough strategy.
 Put
$$
\M:=\left\{(\varphi,X,gT_\v,\sigma)\in{\rm
    Rep}_\K(\Gamma,\v)\times\gl_\v\times(\GL_\v/T_\v)\times
  \t_\v^{\gen}\,\left|\,g^{-1}Xg=\sigma,  
    \mu_\v(\varphi)=X\right\}\right./\!/\GL_\v 
$$
where $\GL_\v$ acts on the first three coordinates as before and
trivially on the last one. Denote by $\pi:\M\rightarrow \t_\v^{\gen}$
the projection to the last coordinate. Note that the stalk at $\sigma$
of the sheaf $R^i\pi_!\overline{\Q}_\ell$ is
$H_c^i(\pi^{-1}(\sigma);\overline{\Q}_\ell)=H_c^i(\M_\sigma;\overline{\Q}_\ell)$. Since
$\pi$ commutes with Weyl group actions, to prove Theorem \ref{key1} we
need to prove that the sheaf $R^i\pi_!\overline{\Q}_\ell$ is constant.
Unfortunately we do not known any algebraic proof of this last
statement so we do not have a proof which works independently from
${\rm char}\,\K$. We follow the same strategy as in~\cite[Proof of
Proposition 2.3.1]{crawley-etal} proving first the statement with
$\K=\C$ using the hyperk\"ahler structure on quiver varieties and
then proving the positive characteristic case by reducing modulo
$p$. 

We prefer to work with \'etale $\Z/\ell^n\Z$-sheaves instead of
$\ell$-adic sheaves. We will show that the sheaves
$R^i\pi_!\Z/\ell^n\Z$ are constant for all $n\geq 1$ assuming that
${\rm char}\,\K$ is either sufficiently large or equal to $0$.  This
will prove Theorem \ref{key1} for the \'etale cohomology
$H_c^i(\M_\sigma;\Z/\ell^n\Z)$ with coefficients in $\Z/\ell^n\Z$.  We
then pass to the direct limit to get the statement for $\ell$-adic
cohomology $$H_c^i(\M_\sigma;\overline{\Q}_\ell)=\left(\varprojlim
  H_c^i(\M_\sigma;\Z/\ell^n\Z)\right)\otimes_{\Z_\ell}\overline{\Q}_\ell.$$

\begin{proof}[Proof of Theorem \ref{key1}] 
  (i) Assume that $\K=\C$. From the equivalence of categories between
  constructible \'etale sheaves on a complex variety and constructible
  sheaves on the underlying topological space (see \cite[\S 6]{BBD}
  for more details) we are reduced to prove that if $\Z/\ell^n\Z$ is
  the constant sheaf (for the analytic topology) on the complex
  variety $\M$, then the analytic sheaf $R^i\pi_!\Z/\ell^n \Z$ is
  constant on $\t_\v^{\gen}$. But this is exactly what is proved in
  \cite[Lemma 48]{maffei} where the hyperk\"ahler structure on quiver
  varieties is used as an essential ingredient. 
  
  We note that in \cite{maffei} it is assumed that the quiver does not have loops. The proof of  \cite[Lemma 48]{maffei} however remains valid in the general case. The only 
  non-trivial ingredient is the proof of the surjectivity \cite[CON3 \S 5]{maffei} of the hyperk\"ahler moment map over the regular locus in the case 
  for quivers with possible loops. More recent result \cite[Theorem 2]{crawley-boevey} implies that as long as the dimension vector is a root 
  the complex moment map is surjective for any quiver. By hyperk\"ahler rotation we get that the corresponding hyperk\"ahler moment map is also surjective.

  (ii) The $\C$-schemes $\M, \t_\v^{\gen}$ and the projection $\pi:
  \M\rightarrow\t_\v^{\gen}$ to the last coordinate can actually be
  defined over~$\Z$ (see~\cite[Appendix B]{crawley-etal}). We will
  denote these by $\M/_\Z$, $\t_\v^{\gen}/_\Z $,
  $\pi/_\Z:\M/_\Z\rightarrow\t_\v^{\gen}/_\Z$ and denote by
  $\calF=\calF_\Z$ the sheaf $R^i(\pi/_\Z)_!\Z/\ell^n\Z$.  Recall that
  if $f:X\rightarrow {\rm Spec}\,\Z$ denotes the structure map of a
  $\Z$-scheme, then by Deligne \cite[Theorem 1.9]{deligne}, for any
  constructible $\Z/\ell^n\Z$-sheaf $\mathcal{E}$ on $X$, there exists
  an open dense subset $U$ of ${\rm Spec} \,\Z$ such that for any base
  change $S\rightarrow U\subset {\rm Spec}\,\Z$ we have
  $(f_*\mathcal{E})_S\simeq (f_S)_*(\mathcal{E}_S)$. Denote by $t/_\Z$
  the structure map of $\t_\v^{\gen}/_\Z$. We are thus reduced to
  prove that the canonical map 

$$\eta:(t/_\Z)^*(t/_\Z)_*
  \calF\rightarrow\calF$$
 given by adjointness is an isomorphism over
  an open subset $U$ of ${\rm Spec}\,\Z$. Indeed, this will prove that
  for any prime $p$ such that $p\Z\in U$, the map
  $\eta/_{\overline{\F}_p}$ obtained from $\eta$ by base change is an
  isomorphism which is equivalent to say that the sheaf
  $\calF_{\overline{\F}_q}\simeq
  R^i(\pi/_{\overline{\F}_q})_!\Z/\ell^n\Z$ is constant. By (i) we
  know that the sheaf $\calF_\C$ is constant, i.e., the isomorphism
  $\eta_\C:(t/_\C)^*(t/_\C)_* \calF_\C\rightarrow \calF_\C$ of \'etale
  sheaves obtained from $\eta$ by base change is an isomorphism. Hence
  if $\calK$ and $\calC$ denote respectively the kernel and co-kernel
  of $\eta$, then $\calK_\C=0$ and $\calC_\C=0$. Since by Deligne
  \cite[Theorem 1.9]{deligne} the sheaf $(t/_\Z)^*(t/_\Z)_* \calF$ is
  constructible over an open subset $V$ of ${\rm Spec}\,\Z$, the
  sheaves $\calK$ and $\calC$ are also constructible over $V$ and so
  the support of $\calK_V$ and $\calC_V$ are constructible sets. Since
  $\calK_\C=0$ and $\calC_\C=0$, the supports do not contain the
  generic point and so there exists an open subset $U$ of $V$ such
  that $\calK_U=\calC_U=0$.
\end{proof}

Assume that ${\rm char}\,\K$ is as in Theorem \ref{key1}. For $w\in W_\v$ and $\tau\in\t_\v^{\gen}$  define
$$\rho^i(w):H_c^i(\M_\tau;\overline{\Q}_\ell)\rightarrow
H_c^i(\M_\tau;\overline{\Q}_\ell)$$ as the composition
$i_{w\tau
  w^{-1},\tau}\circ (w^{-1})^*$. 
   The following proposition is a straightforward 
consequence of Theorem \ref{key1}.
\begin{proposition}The map 
$$
\begin{array}{cccc}
\rho^i=\rho^i_\K:\quad & W_\v& \rightarrow &\GL\left(H_c^i(\M_\tau;\overline{\Q}_\ell)\right)\\
&w & \mapsto &\rho^i(w)
\end{array}
$$
 is a representation of $W_\v$ which, 
 does not depend on the choice of $\tau\in \t_\v^{\gen}$.
\label{rho}\end{proposition}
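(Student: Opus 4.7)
The plan is to verify directly that the formula $\rho^i(w) = i_{w\tau w^{-1},\tau}\circ (w^{-1})^*$ defines a group homomorphism, and then exhibit an explicit intertwiner showing independence of the base point $\tau$. Everything reduces to the two compatibility properties supplied by Theorem~\ref{key1}: the commutative square relating $w^*$ and $i_{\sigma,\tau}$, and the cocycle-type relation $i_{\sigma,\tau}\circ i_{\zeta,\sigma}=i_{\zeta,\tau}$.

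First I would check that $\rho^i(w)$ is a well-defined automorphism of $H_c^i(\M_\tau;\overline{\Q}_\ell)$. Since $w^{-1}$ realizes an isomorphism $\M_{w\tau w^{-1}}\to\M_\tau$, the pullback sends $H_c^i(\M_\tau)$ into $H_c^i(\M_{w\tau w^{-1}})$, and then $i_{w\tau w^{-1},\tau}$ brings us back into $H_c^i(\M_\tau)$; both factors being isomorphisms, $\rho^i(w)$ is an automorphism. Before addressing the homomorphism property I would also record that although the lift $\dot w\in N_{\GL_\v}(T_\v)$ is only unique up to $T_\v$, the map $(\varphi,X,gT_\v)\mapsto (\varphi,X,g\dot w^{-1}T_\v)$ is independent of this choice because $T_\v\subset gT_\v$ and $T_\v$ is stable under conjugation by any representative of $W_\v$; in particular the assignment $w\mapsto w$-map respects composition, so $(w_1w_2)^*=w_2^*\circ w_1^*$.

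Next I would establish the homomorphism property $\rho^i(w_1w_2)=\rho^i(w_1)\circ \rho^i(w_2)$ by the following chain. Starting from
\[
\rho^i(w_1)\rho^i(w_2)=i_{w_1\tau w_1^{-1},\tau}\circ (w_1^{-1})^*\circ i_{w_2\tau w_2^{-1},\tau}\circ (w_2^{-1})^*,
\]
the commutative diagram of Theorem~\ref{key1} applied with $w\leftarrow w_1^{-1}$, $\sigma\leftarrow w_2\tau w_2^{-1}$ yields
\[
(w_1^{-1})^*\circ i_{w_2\tau w_2^{-1},\tau}= i_{w_1w_2\tau w_2^{-1}w_1^{-1},\,w_1\tau w_1^{-1}}\circ (w_1^{-1})^*.
\]
Substituting and using transitivity $i_{w_1\tau w_1^{-1},\tau}\circ i_{w_1w_2\tau w_2^{-1}w_1^{-1},\,w_1\tau w_1^{-1}}=i_{w_1w_2\tau w_2^{-1}w_1^{-1},\tau}$, and then contravariance $(w_1^{-1})^*\circ(w_2^{-1})^*=((w_1w_2)^{-1})^*$ from the preceding paragraph, yields exactly $\rho^i(w_1w_2)$.

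Finally, to prove independence of $\tau$, I would show that for any $\tau,\tau'\in\t_\v^{\gen}$ the canonical isomorphism $i_{\tau,\tau'}$ intertwines $\rho^i_\tau$ and $\rho^i_{\tau'}$. The computation is symmetric to the one above: apply the commutativity of Theorem~\ref{key1} with $w\leftarrow w^{-1}$, $\sigma\leftarrow\tau$, namely $(w^{-1})^*\circ i_{\tau,\tau'}=i_{w\tau w^{-1},w\tau' w^{-1}}\circ(w^{-1})^*$, and then use transitivity of the $i$'s to see that both $i_{\tau,\tau'}\circ\rho^i_\tau(w)$ and $\rho^i_{\tau'}(w)\circ i_{\tau,\tau'}$ collapse to $i_{w\tau w^{-1},\tau'}\circ(w^{-1})^*$. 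Thus the two representations are isomorphic via $i_{\tau,\tau'}$. There is no genuine obstacle here: the statement is a formal bookkeeping consequence of Theorem~\ref{key1}. The only point requiring a moment of care is the independence of the lifts $\dot w$ from $T_\v$, which makes the assignment $w\mapsto (\text{map on }\M)$ a bona fide group action and hence makes contravariance of pullback available.
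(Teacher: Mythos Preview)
Your argument is correct and is precisely the routine verification the paper has in mind; the paper itself gives no proof, simply declaring the proposition ``a straightforward consequence of Theorem~\ref{key1}.'' The only cosmetic slip is the phrase ``$T_\v\subset gT_\v$'' in your discussion of lift-independence---what you need (and clearly intend) is that $\dot w^{-1}$ normalizes $T_\v$, so $g\,t^{-1}\dot w^{-1}T_\v=g\,\dot w^{-1}T_\v$ for any $t\in T_\v$.
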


\begin{theorem} Assume that ${\rm char}\,\K\gg0$. Then the $\overline{\Q}_\ell$-representations $\rho^i_\K$ and $\rho^i_\C$ are
  isomorphic. 
\label{compar}\end{theorem}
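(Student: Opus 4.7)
The plan is to enhance the spreading-out argument from part~(ii) of the proof of Theorem~\ref{key1}, this time tracking the Weyl group action. The data $(\M/_\Z,\,\t_\v^{\gen}/_\Z,\,\pi/_\Z)$ and the $W_\v$-action on $\M/_\Z$ sending $(\varphi,X,gT_\v,\sigma)$ to $(\varphi,X,g\dot w^{-1}T_\v,\dot w\sigma\dot w^{-1})$ are all defined over $\Spec\,\Z$, so the higher direct image $\calF:=R^i(\pi/_\Z)_!\,\Z/\ell^n\Z$ is a constructible $\Z/\ell^n\Z$-sheaf on $\t_\v^{\gen}/_\Z$ carrying a natural $W_\v$-equivariant structure covering the conjugation action on the base. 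By the proof of Theorem~\ref{key1}, there is an open dense $U\subset\Spec\,\Z$ over which the adjunction $\eta:(t/_\Z)^*(t/_\Z)_*\calF\to\calF$ is an isomorphism; thus $\calF$ descends over $U$ to a locally constant constructible sheaf $\mathcal{G}:=(t/_\Z)_*\calF$, and since $W_\v$ acts trivially on $\Spec\,\Z$ the Weyl group action descends to $\mathcal{G}$.

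Next I would fix a lift $\tau\in\t_\v^{\gen}(\Z[1/N])$ of a generic regular semisimple element and regard $\tau$ as a section of $t/_\Z$ over $\Spec\,\Z[1/N]$. The pullback gives a $W_\v$-equivariant isomorphism $\mathcal{G}|_{U\cap\Spec\,\Z[1/N]}\cong \tau^*\calF$. By proper base change for $R\pi_!$ combined with Lemma~\ref{Ms}, the stalk at any geometric point $\bar s$ of this open is canonically $H^i_c(\M_\tau\otimes\kappa(\bar s);\Z/\ell^n\Z)$, equipped with the representation $\rho^i$ over the residue field $\kappa(\bar s)$. The crucial observation is that the canonical isomorphisms $i_{\sigma,\tau}$ used to define $\rho^i$ in Proposition~\ref{rho} are, by the very construction of Theorem~\ref{key1}, those induced from the descent $\calF\cong(t/_\Z)^*\mathcal{G}$, so the formation of $\rho^i$ is compatible with base change.

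Since $\mathcal{G}$ is a $W_\v$-equivariant locally constant constructible sheaf on the connected base $U\cap\Spec\,\Z[1/N]$, \'etale parallel transport yields $W_\v$-equivariant isomorphisms between stalks at any two geometric points. Specialising at $\bar s=\Spec\,\C$ (through the generic point of $\Spec\,\Z$) and at $\bar s=\Spec\,\overline{\F}_p$ for any prime $p$ with $p\Z\in U$ therefore produces $\rho^i_\C\cong\rho^i_{\overline{\F}_p}$ as $\Z/\ell^n\Z[W_\v]$-modules; extending scalars from $\overline{\F}_p$ to an arbitrary algebraically closed field of the same characteristic then gives $\rho^i_\C\cong\rho^i_\K$. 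Passing to the inverse limit in $n$ and tensoring with $\overline{\Q}_\ell$ yields the desired isomorphism of $\overline{\Q}_\ell[W_\v]$-representations. The main obstacle is the base-change compatibility of $\rho^i$: once one recognises that the $i_{\sigma,\tau}$ arise from the descended sheaf $\mathcal{G}$, the remainder is formal manipulation of constructible sheaves.
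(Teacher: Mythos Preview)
Your approach is essentially the same as the paper's: both push the sheaf $\calF$ down to $\Spec\,\Z$, recognise that the isomorphisms $i_{\sigma,\tau}$ arise from this descent, and use that the $W_\v$-action is defined over $\Z$ to conclude that the resulting comparison between stalks over $\C$ and over $\overline{\F}_p$ is $W_\v$-equivariant. One small imprecision: the proof of Theorem~\ref{key1} only gives that $\mathcal{G}=(t/_\Z)_*\calF$ is \emph{constructible} over $U$, not locally constant, so you must shrink $U$ once more before invoking parallel transport---the paper does this explicitly by passing to an \'etale cover $U'\to U$ on which $\mathcal{G}$ becomes constant.
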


\begin{proof} Let $U$ be the open subset of ${\rm Spec}\,\Z$ as in the
  proof of Theorem \ref{key1} over which $\eta$ is an isomorphism and
  $(t/_\Z)_*\calF$ is constructible.  Namely, if we denote by $\calE$
  the restriction of $(t/_\Z)_*\calF$ to $U$, then $\calE$ is
  constructible and $\calF_U$ is canonically isomorphic to
  $(t/_\Z)^*\calE$. Since $\calE$ is constructible, it is locally
  constant over an open subset of $U$ and so we may assume (taking an
  open subset of $U$ if necessary) that there is an \'etale covering
  $s:U'\rightarrow U$ over $U$ with $U'$ irreducible such that the
  sheaf $s^*\calE$ is constant. Now let $p$ be a prime such that
  $p\Z\in U$ and assume that ${\rm char}\,\K=p$. Consider the
  geometric points $\overline{x}_0:{\rm Spec}\,\C\rightarrow U$ and
  $\overline{x}_p:{\rm Spec}\,\K\rightarrow U$ induced by the maps
  $\Z\subset\C$ and $\Z\rightarrow\Z/p\Z\rightarrow\K$, and let
  $\overline{x}_0'$ and $\overline{x}_p'$ be geometric points of $U'$
  lying over $\overline{x}_0$ and $\overline{x}_p$ respectively. Since
  the sheaf $s^*\calE$ is constant, we have a canonical isomorphism
  $(s^*\calE)_{\overline{x}_0'}\simeq (s^*\calE)_{\overline{x}_p'}$
  and so we get an isomorphism
  $h:\calE_{\overline{x}_0}\simeq\calE_{\overline{x}_p}$. Since any
  two geometric points $\tau_0:{\rm Spec}\,\C\rightarrow
  \t_\v^{\gen}/_\C\rightarrow\t_\v^{\gen}/_\Z$ and $\tau_p:{\rm
    Spec}\,\K\rightarrow\t_\v^{\gen}/_\K\rightarrow \t_\v^{\gen}/_\Z$
  lie over $\overline{x}_0$ and $\overline{x}_p$ respectively we have
  $\calF_{\tau_0}\simeq\calE_{\overline{x}_0}\simeq
  \calE_{\overline{x}_p}\simeq\calF_{\tau_p}$. Identifying $\tau_0$
  and $\tau_p$ with their respective images in $\t_\v^{\gen}/_\C$ and
  $\t_\v^{\gen}/_\K$, we deduce an isomorphism
  $$\sigma_{\tau_0,\tau_p}:H_c^i(\M_{\tau_0};\overline{\Q}_\ell)=
  \calF_{\tau_0}\simeq\calF_{\tau_p}=
  H_c^i(\M_{\tau_p};\overline{\Q}_\ell).$$If $\tau_0'$ is another
  geometric point ${\rm Spec}\,\C\rightarrow
  \t_\v^{\gen}/_\C\rightarrow\t_\v^{\gen}/_\Z$, then the isomorphism
  $i_{\tau_0,\tau_0'}:\calF_{\tau_0}\simeq\calF_{\tau_0'}$ of Theorem
  \ref{key1} is the composition of the canonical isomorphisms
  $\calF_{\tau_0}\simeq\calE_{\overline{x}_0}\simeq\calF_{\tau_0'}$,
  and same thing with geometric points over ${\rm Spec}\,\K$. The
  following commutative diagram summarizes what we just said

$$
\xymatrix{H_c^i(\M_{\tau_0};\overline{\Q}_\ell)\ar[dd]_{i_{\tau_0,\tau_0'}}
  \ar[dr]\ar[rrr]^{\sigma_{\tau_0,\tau_p}}
  &&&H_c^i(\M_{\tau_p};\overline{\Q}_\ell)\ar[dd]^{i_{\tau_p,\tau_p'}}\ar[dl]\\
  &\calE_{\overline{x}_0}\ar[r]&\calE_{\overline{x}_p}&\\
  H_c^i(\M_{\tau_0'};\overline{\Q}_\ell)\ar[rrr]^{\sigma_{\tau_0',\tau_p'}}\ar[ur]
  &&&H_c^i(\M_{\tau_p'};\overline{\Q}_\ell)\ar[ul]},
$$
where the maps are all isomorphisms. Now the fact that 
$\sigma_{\tau_0,\tau_p}$ commutes also with the
isomorphisms $$w^*_\C:H_c^i(\M_{\tau_0};\overline{\Q}_\ell)\rightarrow
H_c^i(\M_{w^{-1}\tau_0w};\overline{\Q}_\ell)$$ and
$$w^*_\K:H_c^i(\M_{\tau_p};\overline{\Q}_\ell)\rightarrow
H_c^i(\M_{w^{-1}\tau_pw};\overline{\Q}_\ell)$$ follows from the fact
that the action of $W_\v$ is defined over $\Z$. This proves that the
$\overline{\Q}_\ell$-representations $\rho_\C^i$ and $\rho_\K^i$ are isomorphic.
\end{proof}

Note that  the
representation $\rho^i_\C$ of $W_\v$ on 
$H_c^i(\M_\tau;\overline{\Q}_\ell)$ is defined so that it agrees via the
comparison theorem with the action $\varrho^{i}$ of $W_\v$ on the compactly
supported cohomology $H_c^i(\M_\tau;\C)$ as defined from \cite[Lemma 48]{maffei}.

\subsubsection{Introducing Frobenius}

Here $\K$ is an algebraic closure of a finite field $\F_q$. We use the
same letter $F$ to denote the Frobenius endomorphism on ${\rm
  Rep}_\K\left(\Gamma,\v\right)$ and $\gl_\v$ that raises entries of
matrices to their $q$-th power.  Let us first recall how we define a
map from the set of $F$-stable regular semisimple orbits of $\gl_\v$
onto the set of conjugacy classes of the Weyl group $W_\v$ of
$\GL_\v$. 

Take any regular semisimple $X\in\gl_\v(\F_q)=(\gl_\v)^F$. By
definition the centralizer $C_{\GL_\v}(X)$ is an $F$-stable maximal
torus $T_X$ of $\GL_\v$. Since maximal tori are all $\GL_\v$-conjugate
to each other, we may write $T_X=gT_\v g^{-1}$ for some $g\in
\GL_\v$. Applying $F$ to the identity and using the fact that both
$T_\v$ and $T_X$ are $F$-stable we find that $g^{-1}F(g)\in
N_{\GL_\v}(T_\v)$. Taking the image of $g^{-1}F(g)$ in $W_\v$ gives a
well-defined map from the set of regular semisimple elements of
$\gl_\v(\F_q)$ to $W_\v$ and so a map from $F$-stable regular
semisimple orbits of $\gl_\v$ to conjugacy classes of $W_\v$ (as
$F$-stable orbits always contain a representative in
$\gl_\v(\F_q)$). 

Given $w\in W_\v$ choose a generic regular semisimple adjoint orbit
$\calO$ of $\gl_\v(\F_q)$ mapping to $w$ and denote by $\calQ_\tv^w$
the corresponding quiver variety
associated to $\calO$ defined by \eqref{supernovadefined}. If $w=1$ then we will denote these simply by
$\calQ_\tv$ instead of $\calQ_\tv^1$; note that in this case the orbit
$\calO$ has all its eigenvalues in $\F_q$.  Since $\calO$ is
$F$-stable the  quiver variety $\calQ_\tv^w$ inherits an
action of the Frobenius endomorphism which we also denote by $F$.
 
By Lemma \ref{Ms} and Proposition \ref{rho} we have a well-defined representation $\rho^i$ of $W_\v$ in $H_c^i(\calQ_\tv,\overline{\Q}_\ell)$ assuming that the characteristic of $\K$ is large enough.

The aim of this section is to prove the following theorem.
 \begin{theorem} Assume that ${\rm char}\,\K\gg0$. We have 
 $$
\#  \calQ_\tv^w(\F_q)=\sum_i\Trace\left(\rho^{2i}(w),{ H_c^{2i}(\calQ_\tv;\overline{\Q}_\ell)}
\right) q^i. 
$$
\label{Weyl} \end{theorem}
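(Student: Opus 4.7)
The plan is to combine the Grothendieck--Lefschetz trace formula with the constancy of $R^{i}\pi_!\overline{\Q}_\ell$ on $\t_\v^{\gen}$ proved in Theorem~\ref{key1}. The whole computation is in the spirit of Deligne--Lusztig twisting: a generic $F$-stable orbit of type $w$ produces the same geometric variety as the split $\calQ_\tv$, but with a twisted Frobenius whose effect on cohomology is precomposition with $\rho^{*}(w)$.

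First I would set the stage arithmetically. Pick $\tau\in\t_\v(\F_q)$ so that $\calQ_\tv=\M_\tau$ with its natural Frobenius $F_0$, and pick $\sigma\in\t_\v^{\gen}(\overline{\F}_q)$ of type $w$, meaning $F(\sigma)=\dot w\sigma\dot w^{-1}$; then $\calQ_\tv^w=\M_\sigma$ as an $\F_q$-variety by Lemma~\ref{Ms}. By Theorem~\ref{odd}(ii) the odd compactly supported cohomology vanishes, so Grothendieck--Lefschetz gives
\begin{equation*}
\#\calQ_\tv^w(\F_q)=\sum_i\Trace\bigl(F^{*},\,H_c^{2i}(\M_\sigma;\overline{\Q}_\ell)\bigr).
\end{equation*}

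Next I would transport this trace to the split fiber via the canonical isomorphism $i_{\sigma,\tau}:H_c^{2i}(\M_\sigma)\xrightarrow{\sim}H_c^{2i}(\M_\tau)$ of Theorem~\ref{key1}. The arithmetic Frobenius on $\M_\sigma$ factors as the geometric pullback $F^{*}_{\mathrm{geom}}:H_c^{2i}(\M_{F(\sigma)})\to H_c^{2i}(\M_\sigma)$ followed by the identification $\M_{F(\sigma)}=\M_{\dot w\sigma\dot w^{-1}}\simeq\M_\sigma$ coming from the $W_\v$-action. Using the compatibility square of Theorem~\ref{key1}, the cocycle $i_{\sigma,\tau}\circ i_{\zeta,\sigma}=i_{\zeta,\tau}$, and the definition $\rho^{2i}(w)=i_{w\tau w^{-1},\tau}\circ(w^{-1})^{*}$, a diagram chase shows that under $i_{\sigma,\tau}$ the operator $F^{*}$ on $H_c^{2i}(\M_\sigma)$ is intertwined with $\rho^{2i}(w)\circ F_0^{*}$ on $H_c^{2i}(\calQ_\tv)$.

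Finally, I would invoke that $F_0^{*}$ acts as multiplication by $q^{i}$ on $H_c^{2i}(\calQ_\tv;\overline{\Q}_\ell)$. Since $\tv$ is indivisible, $\calQ_\tv$ is smooth and carries the polynomial-count structure of \cite[(1.1)]{crawley-etal}, which combined with Theorem~\ref{odd}(ii) and N.~Katz's purity criterion forces $F_0^{*}$ to be the scalar $q^{i}$ on each $H_c^{2i}$. Putting the three steps together,
\begin{equation*}
\#\calQ_\tv^w(\F_q)=\sum_i\Trace\bigl(\rho^{2i}(w)\circ F_0^{*},\,H_c^{2i}(\calQ_\tv)\bigr)=\sum_i\Trace\bigl(\rho^{2i}(w),\,H_c^{2i}(\calQ_\tv)\bigr)\,q^{i},
\end{equation*}
as desired.

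The main obstacle is the second step: matching the arithmetic Frobenius twist associated to an orbit of type $w$ with the monodromy action $\rho^{2i}(w)$. Because the $W_\v$-action on cohomology is not induced by a single geometric automorphism but arises from parallel transport in the locally constant family $R^{2i}\pi_!\overline{\Q}_\ell$, one must verify that the canonical identifications $i_{\sigma,\tau}$ intertwine the natural Frobenius structures on the fibers; this amounts to upgrading the constancy of Theorem~\ref{key1} to an $F$-equivariant statement. Once that is checked, the remaining arguments are routine.
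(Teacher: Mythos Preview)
Your proof is correct and follows essentially the same route as the paper: identify $\calQ_\tv^w$ with $\M_\sigma$ carrying the $w$-twisted Frobenius $wF$ (the paper isolates this as a separate lemma), apply the Grothendieck trace formula, transport to the split fibre via the commutative square~\eqref{w} built from Theorem~\ref{key1}, and use purity plus polynomial-count to make $F^*$ act as the scalar $q^i$ on $H_c^{2i}$. The obstacle you flag is dispatched in the paper by the one-line observation that $\pi:\M\to\t_\v^{\gen}$ commutes with $F$, so the constant sheaf $R^i\pi_!\overline{\Q}_\ell$ is automatically a Weil sheaf and the identifications $i_{\sigma,\tau}$ are $F$-equivariant.
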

 
We keep the notation introduced in \S \ref{W1}. 
 
 The Frobenius morphism $(\varphi,X,gT_\v)\mapsto
 (F(\varphi),F(X),F(g)T_\v)$ defines a bijective morphism
 $F:\M_\sigma\rightarrow\M_{F(\sigma)}$.  Since the map $\pi:\M\rightarrow\t_\v^{\gen}$ commutes with $F$, the following diagram commutes
 $$
 \xymatrix{H_c^i(\M_\tau;\overline{\Q}_\ell)\ar[rr]^{F^*}&&H_c^i(\M_{F^{-1}(\tau)};\overline{\Q}_\ell)\\
   H_c^i(\M_\sigma;\overline{\Q}_\ell)\ar[u]^{i_{\sigma,\tau}}\ar[rr]^{F^*}&&H_c^i(\M_{F^{-1}(\sigma)};\overline{\Q}_\ell)
   \ar[u]_{i_{F^{-1}(\sigma),F^{-1}(\tau)}} }
$$
for all $\sigma,\tau\in\t_\v^{\gen}$, where $i_{\sigma,\tau}$ is as in Theorem \ref{key1}.

For $w\in W_\v$ consider the $w$-twisted Frobenius endomorphism
$$
\begin{array}{cccc}
wF: \quad &\gl_\v&\rightarrow&\gl_\v \\
& X&\mapsto& \dot{w}F(X)\dot{w}^{-1}
\end{array}
$$
where $\dot{w}$ is a representative of $w$ in
$N_{\GL_\v}(T_\v)$. 

Let $\sigma\in (\t_\v^{\gen})^{wF}$.  Since $wF(\sigma)=\sigma$ we get
a Frobenius endomorphism 
$$
\begin{array}{cccc}
wF:\quad & \M_\sigma& \rightarrow &\M_\sigma\\
& (\varphi,X,gT_\v) &\mapsto &
(F(\varphi),F(X),F(g)\dot{w}^{-1}T_\v).
\end{array}
$$

Let $\tau\in (\t_\v^{\gen})^F$. By Theorem \ref{key1}, the following diagram
commutes
\beq
\xymatrix{H_c^i(\M_\tau;\overline{\Q}_\ell)\ar[rr]^{\rho^i(w)}&&
H_c^i(\M_\tau;\overline{\Q}_\ell)\ar[rr]^{F^*}&&H_c^i(\M_\tau;\overline{\Q}_\ell)\\ 
H_c^i(\M_\sigma;\overline{\Q}_\ell)\ar[rr]^{w^*}\ar[u]^{i_{\sigma,\tau}}&&
H_c^i(\M_{F(\sigma)};\overline{\Q}_\ell)\ar[rr]^{F^*}\ar[u]^{i_{F(\sigma),\tau}}&&
H_c^i(\M_\sigma;\overline{\Q}_\ell)\ar[u]_{i_{\sigma,\tau}}. 
}
\label{w}\eeq
Note that the arrow labelled by $w^*$ is well-defined as $F(\sigma)=
\dot{w}^{-1}\sigma\dot{w}$.

 Applying the Grothendieck
trace formula to $wF:\M_\sigma\rightarrow\M_\sigma$ we find that
\begin{align*}
 \#\M_\sigma(\K)^{wF} &=\sum_i(-1)^i\Trace \left((wF)^*,
   {H_c^i(\M_\sigma;\overline{\Q}_\ell)}\right)\\  
&=\sum_i(-1)^i\Trace\left(F^*\circ \rho^i(w) 
,{H_c^i(\M_\tau;\overline{\Q}_\ell)}\right)\\
&=\sum_i\Trace\left(F^*\circ \rho^{2i}(w) 
,{H_c^{2i}(\M_\tau;\overline{\Q}_\ell)}\right)
\end{align*}
The second identity follows from the diagram~\eqref{w} and the last one from the fact that $\M_\tau$ has vanishing odd cohomology (see Theorem \ref{odd}(ii)).

It is well-known \cite{crawley-etal} that the cohomology of any
generic  quiver variety  defined over $\F_q$ (i.e. with parameters in $\F_q^I$)  is \emph{pure}, in the sense that the
eigenvalues of the Frobenius $F^*$ on the compactly supported $i$-th
cohomology have absolute value $q^{i/2}$. (It is proved in \cite{aha}
\cite{hausel-kac} that over $\C$ the cohomology has pure mixed Hodge
structure.) It is also well-known that these quiver varieties are
\emph{polynomial count}, i.e., there exists a polynomial
$P(T)\in\Q[T]$ such that for any finite field extension $\F_{q^n}$, the evaluation of $P$ at $q^n$ counts the number of points of the variety over $\F_{q^n}$ (see for instance \cite{crawley-etal},
\cite{hausel-kac}, \cite{aha}).  Since $\tau\in (\t_\v^{\gen})^F$, the variety $\M_\tau$ is  thus pure and polynomial count. Hence by \cite[Appendix A]{crawley-etal} we  have the following result.

\begin{theorem}The automorphism $F^*$ on $H_c^{2i}(\M_\tau;\overline{\Q}_\ell)$ has  a
  unique eigenvalue $q^i$. 
\end{theorem}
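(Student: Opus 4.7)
The plan is to extract the conclusion from the three ingredients explicitly cited just before the statement: purity, polynomial count, and the vanishing of odd cohomology (Theorem~\ref{odd}(ii)). This is the standard argument from \cite[Appendix A]{crawley-etal}, and I would proceed as follows.

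First, by Theorem~\ref{odd}(ii) the variety $\M_\tau$ has no odd compactly supported cohomology, so the Grothendieck--Lefschetz trace formula applied to $F^{*n}$ gives, for every $n\geq 1$,
\[
\#\M_\tau(\F_{q^n}) \;=\; \sum_i \Tr\!\left(F^{*n},H_c^{2i}(\M_\tau;\overline{\Q}_\ell)\right) \;=\; \sum_{i,j}\alpha_{i,j}^{\,n},
\]
where $\{\alpha_{i,j}\}_j$ denotes the multiset of eigenvalues of $F^*$ on $H_c^{2i}(\M_\tau;\overline{\Q}_\ell)$. Second, by the polynomial count property there is a polynomial $P(T)=\sum_k c_k T^k\in\Q[T]$ such that $\#\M_\tau(\F_{q^n})=P(q^n)$ for all $n\geq 1$. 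Third, by purity $|\alpha_{i,j}|=q^i$ for every $j$.

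The key step is then to combine these three identities. For every $n\geq 1$ we have
\[
\sum_{i,j}\alpha_{i,j}^{\,n} \;=\; \sum_{k} c_k \,(q^k)^n.
\]
Since the numbers $q^k$ for distinct $k$ are distinct positive reals, the sequences $n\mapsto(q^k)^n$ are linearly independent, and more generally the sequences $n\mapsto\alpha^n$ for distinct algebraic numbers $\alpha$ are linearly independent. Hence the identity above forces an equality of multisets between $\{\alpha_{i,j}\}_{i,j}$ and $\{q^k\text{ with multiplicity }c_k\}_k$ (in particular each $c_k$ is a nonnegative integer and every $\alpha_{i,j}$ is equal to some $q^k$). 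Combining this with the purity constraint $|\alpha_{i,j}|=q^i$ forces $q^k=q^i$, hence $k=i$ and therefore $\alpha_{i,j}=q^i$ for every $j$.

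The only real point to check carefully is the linear-independence step, which is standard (a Vandermonde argument, or Artin's linear independence of characters applied to the multiplicative semigroup generated by the $\alpha_{i,j}$ and the $q^k$). As a byproduct one also obtains $\dim H_c^{2i}(\M_\tau;\overline{\Q}_\ell)=c_i$, so that $P(T)=\sum_i \dim H_c^{2i}(\M_\tau;\overline{\Q}_\ell)\,T^i$, which is exactly what is needed to feed back into Theorem~\ref{Weyl} and convert the trace formula there into a generating function for the Frobenius-twisted point counts $\#\calQ_\tv^w(\F_q)$.
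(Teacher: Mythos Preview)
Your argument is correct and is exactly the standard argument from \cite[Appendix A]{crawley-etal} that the paper cites in lieu of a proof: purity together with polynomial count and odd vanishing forces all Frobenius eigenvalues on $H_c^{2i}$ to equal $q^i$. The final paragraph about feeding back into Theorem~\ref{Weyl} is superfluous for the statement at hand but harmless.
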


It is not difficult to verify that the two
automorphisms $\rho^i(w)$ and $F^*$ commute for all $i$ and $w\in
W_\v$. We have
$$
\Trace\left(F^*\circ\rho^{2i}(w),{ 
H_c^{2i}(\M_\tau;\overline{\Q}_\ell)}\right)=\Trace\left(\rho^{2i}(w),{
  H_c^{2i}(\M_\tau;\overline{\Q}_\ell)}\right)q^i,
$$ 
from which we deduce that 
\beq
\#\M_\sigma(\K)^{wF}=\sum_i\Trace\left(\rho^{2i}(w),{
  H_c^{2i}(\M_\tau;\overline{\Q}_\ell)}\right)q^i. 
\label{Weyl1}\eeq

Hence Theorem \ref{Weyl} follows from (\ref{Weyl1}) and the following lemma.

\begin{lemma} Let $w\in W_\v$ and let $\sigma$ be a representative of the orbit $\calO^w$ in $(\t_\v)^{wF}$, then 

$$
\#\M_\sigma(\K)^{wF}=\#\calQ_\tv^w(\K)^F.
$$
\end{lemma}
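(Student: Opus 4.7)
The plan is to use the isomorphism $\Phi:\M_\sigma\xrightarrow{\sim}\calQ_\tv^w$ of Lemma~\ref{Ms}, sending $(\varphi,X,gT_\v)\mapsto\varphi$, and to check that it intertwines the twisted Frobenius $wF$ on $\M_\sigma$ with the ordinary Frobenius $F$ on $\calQ_\tv^w$. The lemma is then immediate by comparing $F$-fixed points on the target with $wF$-fixed points on the source.

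First I would verify that $\calO:=\GL_\v\cdot\sigma$ is an $F$-stable generic regular semisimple orbit whose class in $W_\v$ under the correspondence recalled in \S2.2.2 is that of $w$, so that indeed $\calO=\calO^w$ and $\calQ_\tv^w=\mu_\v^{-1}(\calO)/\!/\GL_\v$ with its standard Frobenius structure. The $F$-stability is immediate: $\sigma\in(\t_\v)^{wF}$ gives $F(\sigma)=\dot w^{-1}\sigma\dot w\in\calO$. To read off the Weyl group class, I would invoke Lang--Steinberg to produce $h\in\GL_\v$ with $h^{-1}F(h)=\dot w$; then $\sigma':=h\sigma h^{-1}$ lies in $\gl_\v(\F_q)\cap\calO$ and its centralizer is $hT_\v h^{-1}$, so the recipe of \S2.2.2 (taking $g=h$) assigns $\calO$ the class of $h^{-1}F(h)=\dot w$, i.e.\ $w$.

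Next I would trace the Frobenius actions through $\Phi$. By definition $wF\cdot(\varphi,X,gT_\v)=(F(\varphi),F(X),F(g)\dot w^{-1}T_\v)$, while the Frobenius on $\calQ_\tv^w$ is induced by $\varphi\mapsto F(\varphi)$. Since $\Phi$ is simply the projection onto the first coordinate, the identity $\Phi(wF\cdot(\varphi,X,gT_\v))=F(\varphi)=F(\Phi(\varphi,X,gT_\v))$ is automatic, so $\Phi$ is equivariant with respect to the pair $(wF,F)$. Counting fixed points therefore yields $\#\M_\sigma(\K)^{wF}=\#\calQ_\tv^w(\K)^F$, as desired.

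I foresee no serious obstacle: once Lemma~\ref{Ms} is granted, the argument is essentially formal. The only substantive input beyond that is the Lang--Steinberg step identifying $\calO$ with $\calO^w$, which is standard. One might briefly worry that the GIT-theoretic quotient could muddle the fixed-point count, but because $\mu_\v^{-1}(\calO)\to\calQ_\tv^w$ is a principal $G_\v$-bundle by Theorem~\ref{odd}, the identifications above are genuinely at the level of $\K$-schemes and the fixed-point sets are unambiguous.
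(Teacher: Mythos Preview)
Your proposal is correct and follows exactly the paper's approach: the paper's proof is the single sentence that the isomorphism $\M_\sigma\to\calQ_\tv^w$, $(\varphi,X,gT_\v)\mapsto\varphi$, of Lemma~\ref{Ms} commutes with $wF$ and $F$. Your write-up simply fleshes this out with the Lang--Steinberg identification of the orbit class and a remark on the principal bundle structure, both of which are reasonable but not strictly needed.
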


\begin{proof}It follows from the fact that the isomorphism
  $\M_\sigma\rightarrow \calQ_\tv^w$, $(\varphi,X,gT_\v)\mapsto
  \varphi$ of Lemma~\ref{Ms} commutes with $wF$ and $F$.
\end{proof}

\subsection{Counting points of  $\calQ^w_\tv$ over finite
  fields} 

In this section we will evaluate $\#\calQ^w_\tv(\F_q)$. As in \S
\ref{results} we label the vertices of $\Gamma$ by $\{1,\dots,r\}$ and
we denote by $\calP_\v$ the set of all multi-partitions
$(\mu^1,\dots,\mu^r)$ of size $(v_1,\dots,v_r)$.  The conjugacy class
of an element $w=(w_1,\ldots,w_r)\in W_\v$ determines a 
multi-partition $\lambdahat=(\lambda^1,\dots,\lambda^r)\in\calP_\v$,
where $\lambda^i$ is the cycle type of $w_i\in S_{v_i}$. We will call
$\lambdahat$ the {\it cycle type} of $w$.

Let
 $$
p_\lambdahat:=p_{\lambda^1}(\x_1)\cdots
p_{\lambda^r}(\x_r),
$$
where for a partition $\lambda$, $p_\lambda(\x_i)$ is the
corresponding power symmetric function in the variables of
$\x_i=\{x_{i,1},x_{i,2},\dots\}$ (see \cite[Chapter I,\S 2]{macdonald}). Recall that $\epsilon$ denotes the
sign character of $W_\v$. Denote by $\tilde{\bf C}_\v$  the Cartan matrix of the
quiver $\tilde{\Gamma}_\v$, then
$$
d_\tv:=1-\tfrac 12{^t}\tv \tilde{\bf C}_\v\tv,
$$
equals $\tfrac12 \dim\calQ_\tv$ if $\calQ_\tv$ is non-empty.

The aim of this section is to prove the following theorem.
\begin{theorem} Let $w\in W_\v$ have cycle type
  $\lambdahat\in\calP_\v$. Then
\beq
\#\calQ_\tv^w(\F_q)
=q^{d_\tv}\epsilon(w)\left\langle
  \bH(\x_1,\dots,\x_r;q),p_\lambdahat\right\rangle. 
\eeq
\label{count}\end{theorem}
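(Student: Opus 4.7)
The plan is to count $\#\calQ_\tv^w(\F_q)$ by translating the geometric problem into arithmetic harmonic analysis on the Lie algebra $\gl_\v$, following the blueprint of \cite{aha} but keeping track of the twist by $w$. First I would use Theorem~\ref{odd}(i) to reduce the point count of the quotient to a point count on the total space: since $\mu_\v^{-1}(\calO)\to\calQ_\tv^w$ is a principal $G_\v$-bundle in the \'etale topology, we have
$$\#\calQ_\tv^w(\F_q)=\frac{\#\mu_\v^{-1}(\calO)(\F_q)}{\#G_\v(\F_q)}.$$
Then I would expand $\#\mu_\v^{-1}(\calO)(\F_q)=\sum_{\varphi}\mathbb{1}_\calO(\mu_\v(\varphi))$ using Fourier inversion on $\gl_\v(\F_q)$ with respect to a non-trivial additive character $\psi\colon\F_q\to\C^\times$ paired with the trace form. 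This exchanges the indicator of $\calO$ with its Fourier transform and isolates a sum
$$\sum_{\varphi\in\Rep_{\F_q}(\overline{\Gamma},\v)}\psi\bigl(\Trace(X\mu_\v(\varphi))\bigr)=\sum_{\varphi}\psi\Bigl(\sum_{\gamma\in\Omega}\Trace(X[\varphi_\gamma,\varphi_{\gamma^*}])\Bigr),$$
which factors arrow by arrow into Gauss-type sums and vanishes unless $X$ acts by commuting endomorphisms on the relevant spaces.

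The next step is to exploit this orthogonality to rewrite the double sum as a weighted sum over pairs $(L,Y)$, where $L$ runs over (twisted) Levi subgroups of $\GL_\v$ and $Y$ over regular semisimple elements of the center of $L$. Concretely, one reorganizes the sum in terms of semisimple conjugacy classes of $\gl_\v(\F_q)$, using that the generic orbit $\calO$ intersects only the elliptic-mod-split orbits corresponding to its eigenvalue pattern, and that counting is then governed by Green's character theory for $\GL_n(\F_q)$. Here the Fourier transform of the indicator of a generic regular semisimple orbit is expressed through Deligne--Lusztig induced characters, whose values on unipotent classes are exactly the $(q,t)$-Green polynomials, and these are in turn the coefficients in the decomposition of the Hall--Littlewood polynomials $\tilde{H}_\pi(\x;q)$ appearing in the definition of $\bH$.

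The third step is to track where the twist by $w$ enters. The orbit $\calO$ has cycle type $\lambdahat$, meaning that its eigenvalues form Frobenius orbits of sizes given by the parts of $\lambda^i$ in the $i$-th factor. In the Green-function decomposition this replaces the ``split'' symmetric function $p_{1^\v}$ (that appears for $w=1$) by the power sum $p_\lambdahat$: each cycle of length $\ell$ in $\lambda^i$ contributes a Frobenius trace $p_\ell(\x_i)$ evaluated on the variable set $\x_i$. The Deligne--Lusztig sign $\epsilon_G\epsilon_{T_w}$ associated with the twisted maximal torus $T_w\subset \GL_\v$ produces the factor $\epsilon(w)$, while the power $q^{d_\tv}$ comes from bookkeeping $|\gl_\v(\F_q)|$, the dimensions of $\calO$ and $\Rep_{\F_q}(\overline{\Gamma},\v)$, and the order of $G_\v(\F_q)$, collapsing to $\tfrac12\dim\calQ_\tv=d_\tv$.

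The final step is to recognize the plethystic logarithm. The un-logged sum one obtains naturally corresponds to counting all quiver representations with moment map landing in \emph{any} semisimple orbit of the given eigenvalue data, hence to a plethystic exponential; the genericness of $\calO$ (Definition in \S\ref{supernova}, whereby any graded invariant subspace has nonzero trace) precisely cuts out the ``connected'' part of this generating function, and this connected part is by definition $(q-1)\Log$ of the exponential, namely $\bH(\x_1,\dots,\x_r;q)$. Pairing against $p_\lambdahat$ then yields the claim. The main obstacle, in my view, is step three: carefully matching the twisted Fourier/Green-function computation for an arbitrary $w$ against the specific Hall--Littlewood expansion inside $\bH$, so that the sign $\epsilon(w)$ and the replacement of $p_{1^\v}$ by $p_\lambdahat$ come out with the right normalization; by contrast, the reduction to a principal bundle count and the appearance of the Log from the genericness hypothesis are essentially formal consequences of \cite{aha}.
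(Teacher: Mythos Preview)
Your proposal is correct and follows essentially the same route as the paper: principal-bundle reduction, Fourier transform on $\gl_\v(\F_q)$, evaluation of the resulting character sums via Deligne--Lusztig/Green-function theory (producing $\epsilon(w)$ and the pairing with $p_{\lambdahat}$), and finally identification of the $\Log$. The only nuance is that in the paper the $\Log$ arises formally from the M\"obius-type coefficients $C_{\omhat}^o$ produced by the character sum over the regular part of the center of a Levi (genericity of $\calO$ forces the associated linear character of the torus to be generic, which is what yields these coefficients), rather than from a direct geometric interpretation of the un-logged sum as you suggest.
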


Fix a non-trivial additive character $\Psi:\F_q\rightarrow\C^\times$
and for $X,Y\in\gl_\v$ put $\langle X,Y\rangle:=\Trace(XY)$. Denote by
$C(\gl_\v)$ the $\C$-vector space of all functions
$\gl_\v^F\rightarrow\C$ and define the Fourier transform $\calF:
C(\gl_\v)\rightarrow C(\gl_\v)$ by
$$
\calF(f)(X):=\sum_{Y\in\gl_\v^F}\Psi\left(\langle
  X,Y\rangle\right)f(Y),
$$
with $f\in C(\gl_\v)$ and $X\in\gl_\v^F$. Basic properties of $\calF$
can be found for instance in \cite{letellier}. For an $F$-stable
adjoint orbit $\calO$ of $\gl_\v$ we denote by $1_\calO\in C(\gl_\v)$
the characteristic function of the adjoint orbit $\calO^F$ of
$\gl_\v^F$, i.e., $1_\calO(X)=1$ if $X\in\calO^F$ and $1_\calO(X)=0$
if $X\notin\calO^F$.

\begin{proposition} For any $F$-stable generic adjoint orbit $\calO$
  of $\gl_\v^0$ we have 
\begin{align}
\nonumber
\#\left(\mu_\v^{-1}(\calO)/\!/\GL_\v\right)^F&=
\frac{q-1}{|\GL_\v^F|}\,\,\#\mu_\v^{-1}(\calO)^F \\
&=\frac{(q-1)\,|{\rm
    Rep}_{\F_q}(\Gamma,\v)|}{|\GL_\v^F|\cdot|\gl_\v^F|}\sum_{X\in\gl_\v^F}
\#\left\{\left.\varphi\in {\rm
      Rep}_{\F_q}(\Gamma,\v)\,\right|\,[X,\varphi]=0\right\}
\calF(1_\calO)(X)
\label{form}
\end{align}
where $[X,\varphi]=0$ means that for each arrow $\gamma=i\rightarrow
j$ in $\Omega$ we have $X_j\varphi_\gamma=\varphi_\gamma X_i$.
\end{proposition}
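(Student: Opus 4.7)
The plan is to establish the two equalities in turn. The first is a formal consequence of the principal $G_\v$-bundle structure of the quotient map $\mu_\v^{-1}(\calO)\to\mu_\v^{-1}(\calO)/\!/\GL_\v$ from Theorem~\ref{odd}(i); the second will come from Fourier inversion on $\gl_\v^F$ applied to the indicator $1_\calO$.

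For the first equality, I would note that $G_\v=\GL_\v/\bG_m$ is connected, so by Lang's theorem every $G_\v$-torsor over $\Spec\F_q$ is trivial. Combined with Theorem~\ref{odd}(i), which says the quotient map is a principal $G_\v$-bundle in the \'etale topology, this shows each fibre over an $\F_q$-point of the quotient contributes exactly $|G_\v^F|=|\GL_\v^F|/(q-1)$ points to $\mu_\v^{-1}(\calO)^F$. Summing over the $\F_q$-points of the quotient then gives the first equality.

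For the second equality, I would start from $\#\mu_\v^{-1}(\calO)^F=\sum_{\varphi\in\Rep_{\F_q}(\overline{\Gamma},\v)}1_\calO(\mu_\v(\varphi))$ and apply the Fourier inversion formula
$$
1_\calO(Y)=\frac{1}{|\gl_\v^F|}\sum_{X\in\gl_\v^F}\Psi(-\langle X,Y\rangle)\,\calF(1_\calO)(X),
$$
swapping the order of summation. Writing $\varphi=(\psi,\psi^*)$ with $\psi\in\Rep_{\F_q}(\Gamma,\v)$ and $\psi^*\in\Rep_{\F_q}(\Gamma^{\opp},\v)$, the cyclicity-of-trace identity
$$
\Trace\bigl(X_j\psi_\gamma\psi^*_\gamma - X_i\psi^*_\gamma\psi_\gamma\bigr)=\Trace\bigl(\psi^*_\gamma[X,\psi]_\gamma\bigr), \qquad [X,\psi]_\gamma:=X_j\psi_\gamma-\psi_\gamma X_i,
$$
for each arrow $\gamma:i\to j$ of $\Omega$ converts $\langle X,\mu_\v(\varphi)\rangle$ into a bilinear character pairing between $\psi^*$ and $[X,\psi]$. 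The sum over $\psi^*$ then collapses to $|\Rep_{\F_q}(\Gamma^{\opp},\v)|=|\Rep_{\F_q}(\Gamma,\v)|$ times the indicator $[X,\psi]=0$, which is the desired formula.

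The main point to notice — and the only step requiring any real thought — is the trace identity above: although $\langle X,\mu_\v(\varphi)\rangle$ is quadratic in $\varphi$, the split $\varphi=(\psi,\psi^*)$ makes it bilinear with $[X,\psi]$ appearing linearly in one argument, which is exactly what makes the character sum over $\psi^*$ collapse onto the centralizer condition. Everything else (Lang's theorem, Fourier inversion, reindexing) is formal bookkeeping.
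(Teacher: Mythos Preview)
Your argument is correct and matches the paper's proof in substance. For the first equality the paper invokes exactly the connectedness of $G_\v$ and the free action from Theorem~\ref{odd}(i); for the second, the paper writes $\#\mu_\v^{-1}(\calO)^F=\sum_{z\in\calO^F}\#\mu_\v^{-1}(z)^F$ and cites \cite[Proposition~2]{hausel-kac}, which is precisely the Fourier-inversion and trace-cyclicity computation you spell out explicitly.
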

\begin{proof} The first equality comes from the fact that $G_\v
  =\GL_\v/\bG_m$ is connected and acts freely on $\mu_\v^{-1}(\calO)$
  (see Theorem \ref{odd}(i)). For the second write
$$
\#\,\mu_\v^{-1}(\calO)^F=\sum_{z\in\calO^F}\#\,\mu_\v^{-1}(z)^F
$$
and use \cite[Proposition 2]{hausel-kac}.
\end{proof}

In order to compute the right hand side of Formula (\ref{form}) we
need to introduce some notation. Denote by $\mathfrak{O}$ the set of
all $F$-orbits of $\K$. The adjoint orbits of $\gl_n^F$ are
parametrized by the maps $h:\mathfrak{O}\rightarrow\calP$ such that
$$
\sum_{\gamma\in\mathfrak{O}}|\gamma|\cdot |h(\gamma)|=n.
$$
Denote by $0\in\calP$ the unique partition of $0$. The \emph{type} of
an adjoint orbit $\calO$ of $\gl_n^F$ corresponding to
$h:\mathfrak{O}\rightarrow\calP$ is defined as the map $\omega_\calO$
that assigns to a positive integer $d$ and a non-zero partition
$\lambda$ the number of Frobenius orbits $\gamma\in \mathfrak O$ of
degree $d$ such that $h(\gamma)=\lambda$.

It is sometimes also convenient (see \cite{aha})
to write a type as follows. Choose a total ordering $\geq$ on
partitions which we extend to a total ordering on the set
$\Z_{>0}\times(\calP\backslash\{0\})$ as $(d,\lambda)\geq
(d',\lambda')$ if $d\geq d'$ and $\lambda\geq\lambda'$. Then we may
write the type $\omega_\calO$ as a the strictly decreasing sequence
$(d_1,\lambda^1)^{n_1}(d_2,\lambda^2)^{n_2}\cdots(d_s,\lambda^s)^{n_s}$
with $n_i=\omega_\calO(d_i,\lambda^i)$. The set of all
non-increasing sequences
$(d_1,\lambda^1)(d_2,\lambda^2)\cdots(d_s,\lambda^s)$ of size $n$
(i.e., $\sum_{i=1}^sd_i|\lambda^i|=n$) denoted by $\T_n$ parametrizes
the types of the adjoint obits of $\gl_n^F$.

It is easy to extend this to adjoint orbits of $\gl_\v^F$. They are
parametrized by the set of all maps
$h=(h_1,\dots,h_r):\mathfrak{O}\rightarrow\calP^r$ such that for each
$i=1,\dots,r$, we have
$$
\sum_{\gamma\in\mathfrak{O}}|\gamma|\cdot |h_i(\gamma)|=v_i.
$$
A \emph{type} of  an adjoint orbit  $\calO$ of $\gl_\v^F$
corresponding to $h:\mathfrak{O}\rightarrow\calP^r$ is now a map $\omhat_\calO$ that assigns to a positive integer $d$ and a
non-zero multi-partition $\lambdahat=(\lambda^1,\ldots,\lambda^r)$ the number of Frobenius orbits $\gamma\in\mathfrak{O}$ of degree $d$ such that $h(\gamma)=\lambdahat$.

As above, after choosing a total ordering on the set
$\Z_{>0}\times(\calP^r\backslash\{0\})$ we may write $\omhat_\calO$ as
a (strictly) decreasing sequence
$(d_1,\lambdahat_1)^{n_1}(d_2,\lambdahat_2)^{n_2}\cdots(d_s,\lambdahat_s)^{n_s}$
with $n_i=\omhat_\calO(d_i,\lambdahat_i)$. We denote by $\T_\v$ the
set of all non-increasing sequences
$(d_1,\lambdahat_1)\dots(d_s,\lambdahat_s)$ of size $\v$ so that
$\T_\v$ parametrizes the types of the adjoint orbits of $\gl_\v^F$.
We may also write a type $\omhat\in\T_\v$ as
$(\omega_1,\ldots,\omega_r)$, where
$\omega_i=(d_1,\lambda^{i,1})(d_2,\lambda^{i,2})\ldots$, with
$\lambda^{i,j}$ the $i$-th coordinate of $\lambdahat_j$, is a type in
$\T_{v_i}$.

Given any family $\{A_\muhat(\x_1,\dots,\x_r;q)\}_{\muhat\in\calP^r}$
of functions separately symmetric in each set $\x_1,\dots,\x_k$ of
infinitely many variables with $A_0=1$, we extend its definition to
types $\omhat=(d_1,\lambdahat_1)\dots(d_s,\lambdahat_s)\in\T_\v$ as
 $$
 A_\omhat(\x_1,\dots\x_r;q):=
 \prod_{i=1}^sA_{\lambdahat_i}(\x_1^{d_i},\dots,\x_r^{d_i};q^{d_i}),   
 $$
 where $\x^d$ stands for all the variables $x_1,x_2,\dots$ in $\x$
 replaced by $x_1^d,x_2^d,\dots$. 

For $\pihat=(\pi^1,\dots,\pi^r)\in\calP^r$ put 
$$
\calA_\pihat(q)=\prod_{i\rightarrow j\in \Omega} q^{\langle
  \pi^i,\pi^j\rangle},
\qquad \calZ_\pihat(q):=\prod_{i\in I}
q^{\langle\pi^i,\pi^i\rangle} 
\prod_{k}\prod_{j=1}^{m_k(\pi^i)}(1-q^{-j}),
\qquad \calH_\pihat(q):=\frac{\calA_\pihat(q)}{\calZ_\pihat(q)},
$$
where we use the same notation as in \S \ref{results}.  Then by
\cite[Theorem 3.4]{hua}, for any element $X$ in an adjoint orbit of
$\gl_\v^F$ of type $\omhat\in\T_\v$ we have
$$ 
\#\left\{\left.\varphi\in {\rm
      Rep}_{\F_q}(\Gamma,\v)\,\right|\,[X,\varphi]=0\right\}=\calA_\omhat(q),
\qquad
|C_{\GL_\v^F}(X)|=\calZ_\omhat(q).
$$

Hence
\beq
\frac{1}{|\GL_\v^F|}\sum_{X\in\gl_\v^F} \#\left\{\left.\varphi\in {\rm
      Rep}_{\F_q}(\Gamma,\v)\,\right|\,[X,\varphi]=0\right\}
\calF(1_\calO)(X)=\sum_{\omhat\in\T_\v}\calH_\omhat(q)\sum_{\calO'}
\calF(1_\calO)(\calO'),   
\label{hua1}
\eeq 
where the last sum is over the adjoint orbits $\calO'$ of $\gl_\v^F$
of type $\omhat$ and $\calF(1_\calO)(\calO')$ denotes the common value
$\calF(1_\calO)(X)$ for $X\in \calO'$.

For a type $\omhat=(d_1,\lambdahat_1)\cdots(d_s,\lambdahat_s)\in\T_\v$  put
$$
C_\omhat^o:=\begin{cases}
  \frac{\mu(d)}{d}(-1)^{s-1}\frac{(s-1)!}{\prod_\lambdahat
    m_{d,\lambdahat}(\omhat)!} &\text{ if } d_1=d_2=\cdots=d_s=d\\ 
0&\text{ otherwise}.\end{cases}
$$
where $m_{d,\lambdahat}(\omhat)$ denotes the multiplicity of the pair
$(d,\lambdahat)$ in $\omhat$ and where $\mu$ denotes the ordinary
M\"obius function. 

Recall that we defined a map (see the beginning
of~\S\ref{Weyl-gp-action}) from regular semisimple adjoint orbits of
$\gl_\v(\F_q)$ to $W_\v$.

\begin{proposition}Let $w=(w_1,\ldots,w_r)\in W_\v$ have cycle type
  $\lambdahat=(\lambda^1,\dots,\lambda^r)\in\calP_\v$. Let $\calO$ be
  a generic regular semisimple adjoint orbit of $\gl_\v(\F_q)$ mapping
  to $w$ and $\omhat=(\omega_1,\ldots,\omega_r)$ is any type in
  $\T_\v$. Then
$$
\sum_{\calO'}
\calF(1_\calO)(\calO')=\epsilon(w)\,
q^{1+\delta_\v/2}
\,C_\omhat^o\prod_{i=1}^r\left\langle
  \tilde{H}_{\omega_i}(\x_i;q),p_{\lambda^i}(\x_i)\right\rangle,
$$
where the sum is over the adjoint orbits $\calO'$ of $\gl_\v^F$
of type $\omhat$, $\calF(1_\calO)(\calO')$ denotes the common value
$\calF(1_\calO)(X)$ for $X\in \calO'$ and $\delta_\v={\rm dim}\,
\GL_\v-{\rm dim}\, T_\v=\sum_i v_i^2-\sum_iv_i$. 
\label{charsum}
\end{proposition}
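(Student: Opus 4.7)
My plan is to compute $\calF(1_\calO)(X)$ in closed form for an element $X$ lying in an orbit $\calO'$ of prescribed type, and then sum. The starting input is the standard formula (see \cite{letellier} and its reformulation in \cite[\S 2.5]{aha}) expressing the Fourier transform of the characteristic function of a regular semisimple adjoint orbit of $\gl_n(\F_q)$, evaluated at an element of given type, in terms of the transformed Hall--Littlewood polynomials. Since $\gl_\v=\bigoplus_i\gl_{v_i}$ and the trace pairing respects this decomposition, the Fourier transform $\calF$ factors vertex-wise, and the type of $X\in\gl_\v^F$ decomposes into types $\omega_i\in\T_{v_i}$ on each factor. The eigenvalue data of $\calO$ is controlled by $w$: if $\calO$ maps to $w\in W_\v$ as in the map defined at the start of~\S\ref{Weyl-gp-action}, then the Frobenius-orbit structure on the eigenvalues of $\calO$ inside each $\gl_{v_i}$ realizes the cycle type $\lambda^i$. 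Putting these together, the single-orbit Fourier transform should take the form $\epsilon(w)\,q^{N}\prod_i \tilde H_{\omega_i}(\alphahat_i;q)$, up to normalization, where $\alphahat_i$ collects the eigenvalues in $\gl_{v_i}$; the $\epsilon(w)$ is the Weyl-element sign inherited from the Deligne--Lusztig formulas for twisted maximal tori.

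Next I would sum over orbits $\calO'$ of type $\omhat=(d_1,\lambdahat_1)\cdots(d_s,\lambdahat_s)$. Such an orbit is specified by choosing, for each pair $(d_j,\lambdahat_j)$, a Frobenius orbit of degree $d_j$ in $\bar\F_q$ to serve as the common eigenvalue set carrying Jordan data $\lambdahat_j$. Expanding $\tilde H_{\omega_i}$ in the power-sum basis, the evaluation of $p_\rho$ on the semisimple eigenvalues becomes a $\Psi$-character sum over Frobenius orbits. The fact that the cycle type $\lambda^i$ of $w_i$ is exactly the Frobenius-orbit shape on the eigenvalues of $\calO$ in the $i$-th block forces only the $p_{\lambda^i}$-contribution to survive in each factor, producing the pairing $\prod_i\langle \tilde H_{\omega_i},p_{\lambda^i}\rangle$.

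Finally, the sum over the choice of Frobenius orbits appearing in the eigenvalue data of $\calO'$, constrained by the genericity of $\calO$ (no proper subsum of its eigenvalues vanishes), is a Witt--M\"obius / necklace-counting problem. Genericity kills every contribution unless all $d_j$ equal a common $d$, and in that case Witt's primitive-necklace identity produces the factor $\mu(d)/d$, while the combinatorial multiplicity $(-1)^{s-1}(s-1)!/\prod m_{d,\lambdahat}(\omhat)!$ arises as the coefficient of a plethystic-logarithm expansion of the associated generating series. Together these reconstruct $C^o_\omhat$. The residual power $q^{1+\delta_\v/2}$ collects the factor $q-1=|\bG_m^F|$ from the $\GL_\v/\bG_m$-quotient normalization, together with the standard Fourier-transform Jacobian $q^{(\dim\gl_\v-\dim\t_\v)/2}$ at a regular semisimple class. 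The main obstacle I anticipate is the clean execution of this last step: the Witt--M\"obius inversion must produce exactly $C^o_\omhat$ with its sharp restriction to the equal-degrees case, and the multi-vertex character sums over eigenvalue configurations must all collapse correctly; the analogous computation in the star-shaped setting treated in~\cite[\S 4--5]{aha} should supply the template.
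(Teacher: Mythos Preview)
Your outline is broadly the same strategy as the paper's: factor the Fourier transform vertex-wise, plug in the Deligne--Lusztig/Green-function formula for $\calF^{\gl_{v_i}}(1_{\calO_i})$, and reduce everything to a character sum over the eigenvalue data of orbits of type~$\omhat$. Two places, however, deserve correction or sharpening.

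\emph{The character sum and $C^o_\omhat$.} The paper parametrizes orbits $\calO'$ of type $\omhat$ by elements $z\in(z_{\mathfrak l})_{\reg}^F$, where $\mathfrak l$ is the Lie algebra of the Levi $L=C_{\GL_N}(\sigma)$ inside $\GL_N$, $N=\sum_iv_i$; the fibers of this parametrization have order $|W(\omhat)|=\prod_{d,\lambdahat}d^{m_{d,\lambdahat}(\omhat)}m_{d,\lambdahat}(\omhat)!$. After inserting the formula for $\calF^{\gl_{v_i}}(1_{\calO_i})(z_i+u_i)$ and separating variables, one is left with
\[
\sum_{z\in(z_{\mathfrak l})_{\reg}^F}\Psi\bigl(\langle X,h^{-1}zh\rangle\bigr).
\]
Rather than attack this by a direct Witt--M\"obius/necklace count, the paper invokes \cite[Proposition~6.8.3]{letellier2}, which evaluates such sums to $(-1)^{s-1}\mu(d)\,q\,(s-1)!$ (or $0$ if the $d_i$ are not all equal) \emph{provided} the linear character $\Theta:z\mapsto\Psi(\langle X,z\rangle)$ on $\mathfrak t_{\lambdahat}^F$ is \emph{generic}: its restriction to $z_{\mathfrak l'}^F$ is nontrivial for every proper $F$-stable Levi $L'\supset T_{\lambdahat}$ of $\GL_N$. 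The genuinely new step in the proof is verifying this genericity from the genericity of $\calO$, via the embedding $\GL_\v\hookrightarrow\GL_N$ and the graded-subspace condition in the definition of a generic orbit. Your phrase ``no proper subsum of its eigenvalues vanishes'' is close in spirit but too coarse: the argument really passes through Levi subgroups of $\GL_N$ and the intersection $\gl_\v\cap\mathfrak l'$. Also, the multinomial factor in $C^o_\omhat$ does not arise here from a plethystic-logarithm expansion; it is simply $|W(\omhat)|^{-1}$ (the fiber order) combined with the $(s-1)!$ coming out of the character-sum evaluation.

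\emph{The power of $q$.} Your accounting for $q^{1+\delta_\v/2}$ is off. The $q^{\delta_\v/2}$ is exactly the product $\prod_i q^{(v_i^2-v_i)/2}$ coming from the normalization in the formula for $\calF^{\gl_{v_i}}(1_{\calO_i})$; the remaining single factor of $q$ is produced by the character sum $\sum_z\Psi(\cdots)$ above. There is no $(q-1)$ here --- the $(q-1)$ from the $\GL_\v/\bG_m$-quotient belongs to the previous proposition, not to this one.
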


\begin{proof} For a partition $\lambda=(\lambda_1,\dots,\lambda_m)$
  and $d\in\Z_{>0}$, define
  $d\cdot\lambda:=(d\lambda_1,\dots,d\lambda_m)$, and for a type
  $\tau=(d_1,\tau^1)\cdots( d_s,\tau^s)\in\T_n$ put
  $[\tau]:=\cup_id_i\cdot\tau^i$, a partition of $n$.  We will say
  that two types $\nu=(d_1,\nu^1)\cdots(d_s,\nu^s)$ and
  $\omega=(e_1,\omega^1)\cdots(e_l,\omega^l)$ are {\it compatible},
  which we denote $\nu\sim\omega$, if $s=l$ and for each $i=1,\dots,s$
  we have $d_i=e_i$ and $|\nu^i|=|\omega^i|$. For two partitions of
  same size $\lambda,\mu$ let $Q_\mu^\lambda(q)$ be the Green
  polynomial as defined for instance in \cite[Chap. III \S
  7]{macdonald}. For two compatible types $\nu$ and $\omega$ we put
  $Q_\nu^\omega(q):=\prod_i Q_{\nu^i}^{\omega^i}(q^{d_i})$ and
  $Q_\nu^\omega(q):=0$ otherwise.  Let $z_\lambda$ be the order of the
  centralizer of an element of cycle type $\lambda$ in
  $S_{|\lambda|}$. For a type $\nu=(d_1,\nu_1),\ldots,(d_r,\nu_r)$ set
  $z_\nu=\prod_i z_{\nu^i}$.

  Notice that for $\lambda$ a partition $(d_1,d_2,\dots,d_s)$ of $n$
  we have $p_\lambda(\x)=s_\tau(\x)$ where $\tau\in\T_n$ is the type
  $(d_1,1)(d_2,1)\cdots(d_s,1)$. Hence by \cite[Lemma 2.3.5]{aha} for
  any $\omega\in\T_n$ and any partition $\lambda$ of $n$
$$
\left\langle\tilde{H}_{\omega}(\x;q),
  p_{\lambda}(\x)\right\rangle=z_\lambda\sum_{\{\nu\in\T_n\;|\;[\nu]  
  =\lambda\}}\frac{Q^\omega_\nu(q)}{z_\nu}.
$$

We are therefore reduced to prove that
\beq
\sum_{\calO'}
\calF(1_\calO)(\calO')=\epsilon(w)\,
q^{1+\delta_\v/2}
\,C_\omhat^o\prod_{i=1}^rz_{\lambda^i}
\sum_{\{\nu\in\T_n\;|\;[\nu]=\lambda^i\}}
\frac{Q^{\omega_i}_\nu(q)}{z_\nu}. 
\label{reduce}\eeq
The proof of this formula is similar to that of \cite[Theorem
4.3.1(2)]{aha} although the context is different and will require some
new calculations. Embed $\GL_\v$ in $\GL_N$ with
$N=\sum_{i=1}^rv_i$. Write
$\omhat=(d_1,\muhat_1)\cdots(d_s,\muhat_s)$ and define
$\tomega=(d_1,\talpha_1)\cdots(d_s,\talpha_s)\in\T_N$ where for a
multi-partition $\muhat=(\mu^1,\dots,\mu^r)\in\calP^r$, we
put $\talpha=\cup_i\mu^i$. If $\calO'$ is an $F$-stable adjoint
orbit of $\gl_\v$ of type $\omhat$, then the unique $\GL_N$-adjoint
orbit of $\gl_N$ which contains $\calO'$ is of type $\tomega$. Consider a representative of an adjoint orbit of $\gl_\v^F\subset\gl_N^F$ of type
$\omhat$ with Jordan form $\sigma+u$ where $\sigma$ is semisimple and
$u$ is nilpotent. Put $L:=C_{\GL_N}(\sigma)$ and, denote by
$\mathfrak{l}$ the Lie algebra of $L$ and by $z_\mathfrak{l}$ the
center of $\mathfrak{l}$. Note that $\mathfrak{l}$ is not contained in $\gl_\v$ unless each for each $i$ the multi-partition $\muhat_i$ has a unique non-zero coordinate in which case $L=M:=C_{\GL_\v}(\sigma)$. However we always have $z_{\mathfrak{l}}\subset\gl_\v$. Put $(z_\mathfrak{l})_{\reg}:=\{y\in
z_\mathfrak{l}\;|\; C_{\GL_N}(y)=L\}$.  The map that sends $z\in
(z_\mathfrak{l})_{\reg}^F$ to the $\GL_\v$-orbits of $z+u$ surjects
onto the set of adjoint orbits $\calO'$ of $\gl_\v^F$ of type
$\omhat$. The fibers of this map can be identified with
$\{g\in\GL_\v^F\;|\; gLg^{-1}=L,g C_u g^{-1}=C_u\}/M$, where $C_u$ is the $M$-orbit of $u$. Hence we
may in turn identify these fibers  with
$$
W(\omhat):=\prod_{d,\lambdahat}
(\Z/d\Z)^{m_{d,\lambdahat}(\omhat)}\times
S_{m_{d,\lambdahat}(\omhat)}. 
$$
It follows that
\begin{align*}
  \sum_{\calO'}\calF(1_\calO)(\calO')
&=\frac{1}{|W(\omhat)|}
\sum_{z\in(z_\mathfrak{l})_{\reg}^F}\calF(1_\calO)(z+u)\\ 
  &=\frac{1}{|W(\omhat)|}\sum_{z\in(z_\mathfrak{l})_{\reg}^F}
  \prod_{i=1}^r\calF^{\gl_{v_i}}\left(1_{\calO_i}\right)(z_i+u_i),
\end{align*}
where $\calF^{\gl_{v_i}}$ denotes the Fourier transform on
$\gl_{v_i}^F$, $\calO_i$ is the $i$-th coordinate of $\calO$ (a
$\GL_{v_i}^F$-orbit of $\gl_{v_i}^F$) and $z_i,u_i$ are the $i$-th
coordinates of $z,u$ respectively.

It is known \cite[Theorem 7.3.3]{letellier}\cite[Formulas (2.5.4),
(2.5.5)]{aha} that
$$
\calF^{\gl_{v_i}} \left(1_{\calO_i}\right)(z_i+u_i)=\epsilon(w_i)
q^{\frac{1}{2}(v_i^2-v_i)}|M_i^F|^{-1}\sum_{\{h\in\GL_{v_i}^F\;|\;
  h^{-1}z_i
  h\in\mathfrak{t}_{\lambda^i}\}}Q_{hT_{\lambda^i}h^{-1}}^{M_i}(u_i+1)
\Psi\left(\left\langle
    X_i,h^{-1}z_i h\right\rangle\right) 
$$
where $X_i$ is a fixed element in $\calO_i^F$, $T_{\lambda^i}$ is the
unique $F$-stable maximal torus of $\GL_{v_i}$ whose Lie algebra
$\mathfrak{t}_{\lambda^i}$ contains $X_i$, $M_i=C_{\GL_\v}(z_i)$ and
where $Q_{hT_{\lambda^i}h^{-1}}^{M_i}$ is the Green function defined
by Deligne and Lusztig \cite{DLu} (the values of these functions are
products of usual Green polynomials).

It follows that
\begin{align*}
  \sum_{\calO'}\calF(1_\calO)(\calO')
  &=\frac{1}{|W(\omhat)|}\epsilon(w) 
  q^{\frac{1}{2}(\sum_i v_i^2-\sum_i    v_i)}
\sum_{h=(h_1,\dots,h_r)}
\Phi_h(u)
\sum_{z\in(z_\mathfrak{l})_{\reg}^F}
  \prod_{i=1}^r\Psi\left(\left\langle  
      X_i,h_i^{-1}z_ih_i\right\rangle\right)\\ 
  &=\frac{1}{|W(\omhat)|}\epsilon(w) q^{\frac{1}{2}(\sum_i
    v_i^2-\sum_i    v_i)}
\sum_{h}
\Phi_h(u)
  \sum_{z\in(z_\mathfrak{l})_{\reg}^F}\Psi\left(\left\langle  
      X,h^{-1}zh\right\rangle\right)
\end{align*} where 
$h=(h_1,\dots,h_r)$ runs over the set 
$$
\{h\in\GL_\v^F\;|\;
hT_\lambdahat h^{-1}\subset M\} =\{h\in\GL_\v^F\;|\; z\in
hT_\lambdahat h^{-1}\}
$$
with $T_\lambdahat:=\prod_{i=1}^rT_{\lambda^i}$,
$X:=(X_i)_{i=1,\dots,r}\in\mathfrak{t}_\lambdahat\cap\calO$ and where
to simplify we set
$$
\Phi_h(u):=
  \prod_{i=1}^r|M_i^F|^{-1}
    Q_{h_iT_{\lambda^i}h_i^{-1}}^{M_i}(u_i+1).
$$

To finish the proof it suffices to check the following two formulas
\begin{align*}
  &\sum_{z\in(z_\mathfrak{l})_{\reg}^F}\Psi \left(\left\langle
      X,h^{-1}zh\right\rangle\right)=
  \begin{cases}
(-1)^{s-1}\mu(d)q(s-1)!
    & \text{ if } d_i=d \text{ for all }\; i=1,\dots,s\\
    0&\text{ otherwise,}
\end{cases}\label{1} \\
  &\sum_h \Phi_h(u)=
  \prod_{i=1}^rz_{\lambda^i}\sum_{\{\nu\in\T_n\;|\;[\nu]=\lambda^i\}}
  \frac{Q^{\omega_i}_\nu(q)}{z_\nu},
\end{align*}
where recall that $\omhat=(d_1,\muhat^1),(d_2,\muhat^2),\ldots$.  

The proof of the second formula is contained in the proof of
\cite[Theorem 4.3.1(2)]{aha}. For the first formula, by
\cite[Proposition 6.8.3]{letellier2}, it is enough to prove that the
linear character
$\Theta:\mathfrak{t}_\lambdahat^F\rightarrow\C^\times$, $z\mapsto
\Psi\left(\left\langle X,z\right\rangle\right)$ is a \emph{generic}
character, i.e., the restriction of $\Theta$ to $z_{\gl_N}^F$ is
trivial and for any $F$-stable Levi subgroup $L$ (of some parabolic
subgroup) of $\GL_N$ which contains $T_\lambdahat$ the restriction of
$\Theta$ to $z_\mathfrak{l}^F\subset\mathfrak{t}_\lambda^F$ is
non-trivial unless $L=\GL_N$.

But $\Theta$ is generic because the adjoint orbit $\calO$ is
generic. Indeed, since $\Trace(\calO)=0$ we have
$\Theta|_{z_{\gl_N}^F}=1$. Now assume that $L\supset T_\lambdahat$
satisfies $\Theta|_{z_\mathfrak{l}^F}=1$. There is a decomposition
$\K^N=W_1\oplus W_2\oplus\cdots \oplus W_s$, with $W_j\neq 0$,  such that $\mathfrak{l}$
is $\GL_N$-conjugate to $\bigoplus_i\gl(W_i)$.  An element $z\in
z_{\mathfrak{l}}$ is of the form $(\xi_1\iota_1,\dots,\xi_s\iota_s)$
where $\xi_1,\dots,\xi_s\in\K$ and where $\iota_j$ denotes the identity
endomorphism of $W_j$. Denote by $X^j$ the $\gl(W_j)$ coordinate of
$X$. Since $\Theta|_{z_\mathfrak{l}^F}=1$, we must have $\left\langle
  X,z\right\rangle=\sum_{j=1}^s\xi_j\Trace(X^j)=0$ for all
$z=(\xi_1\iota_1,\ldots,\xi_s\iota_s)$ and so
$\Trace(X^j)=0$ for all $j=1,\ldots,s$. Now $\gl_\v$ and
$\mathfrak{l}$ are two Levi sub-algebras of $\gl_N$ that contains
$\mathfrak{t}_\lambdahat$, hence
$\gl_\v\cap\mathfrak{l}\simeq\bigoplus_{i,j}\gl(U_{i,j})$ where
$W_j=\bigoplus_i U_{i,j}$. For each $j=1,\ldots s$, the space
$\bigoplus_i U_{i,j}$ is also a graded subspace of
$\K^N=\K^\v$ on which $X$ acts by $X^j$ and so
by the genericity assumption we must have $W_j=\K^N$ i.e. $L=\GL_N$.

\end{proof}

\begin{proof}[Proof of Theorem \ref{count}]By
  definition~\eqref{dim-defn} 
\begin{align*}
d_\tv&= \sum_{i\rightarrow j\in\Omega}v_iv_j-\sum_i
v_i^2+\delta_\v+2\\ 
&=\dim\Rep_\K(\Gamma,\v)-\dim\gl_\v
  +1+\delta_\v/2. 
\end{align*}

Hence applying Formula (\ref{hua1}) and Proposition \ref{charsum} we
find that
\begin{align*}
  \#\,\calQ_\tv^w(\F_q)&=\epsilon(w)(q-1)
  q^{d_\tv}\left\langle\sum_{\omhat\in\T_\v}
    C_\omhat^o\calH_\omhat(q)\prod_{i=1}^r  
    \tilde{H}_{\omega_i}(\x_i;q),p_\lambdahat\right\rangle\\
  &=\epsilon(w)(q-1) q^{d_\tv}
  \left\langle\Log\,\left(\sum_{\pihat=(\pi_1,\dots\pi_r)\in\calP^r}
      \calH_\pihat(q)\prod_{i=1}^r 
      \tilde{H}_{\pi_i}(\x_i;q)\right),p_\lambdahat\right\rangle\\ 
\end{align*}
The last identity follows from the general properties of $\Log$ (see
\cite[Formula (2.3.9)]{aha}).

\end{proof}

\subsection{\Apols}

We prove a general fact about how to extract \Apols of
the quivers $\Gamma_\muhat$ from the generating function~$\bH(\x_1,\ldots,x_r;q)$
defined in~\eqref{maingen}.  The statement and proof are along the
same lines as~\cite[Theorem 3.2.7]{aha2}. 
For any
multi-partition $\muhat\in\calP^r$ denote by $A_\muhat(q)$ the \Apol
 associated with $(\Gamma_\muhat,\v_\muhat)$ where
$\v_\muhat$ is defined as in \S \ref{results} with $\v=|\muhat|$. For
a partition $\lambda$, denote by $h_\lambda$ the complete symmetric
function as in \cite{macdonald}. 

\begin{theorem} For any $\muhat\in\calP_\muhat$, we have 
$$
\left\langle \bH(\x_1,\dots,\x_r;q),h_\muhat\right\rangle =A_\muhat(q).
$$
\label{theoaha2}
\end{theorem}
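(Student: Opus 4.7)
The plan is to apply Hua's identity~\eqref{hua} to the extended quiver $(\Gamma_\muhat,\v_\muhat)$ and then collapse the leg contributions so that only data indexed by the original vertices of $\Gamma$ survives, in a form that matches the pairing $\langle\cdot,h_\muhat\rangle$. Label the vertex set of $\Gamma_\muhat$ as $I$ together with extra vertices $(i,1),\ldots,(i,l_i-1)$ on the leg attached at $i\in I$, and decompose a multi-partition on $\Gamma_\muhat$ as $\pihat'=(\pihat,\{\tau^{i,k}\})$, where $\pihat=(\pi^1,\dots,\pi^r)$ records the base partitions at the original vertices and $\tau^{i,k}$ lives at $(i,k)$. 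Applying Hua's formula to $\Gamma_\muhat$ and extracting the coefficient of $T^{\v_\muhat}$ then forces $|\pi^i|=v_i$ and $|\tau^{i,k}|=v_i-\mu^i_1-\cdots-\mu^i_k$ for all $i$ and $k$, and the arrow/vertex contributions factor naturally into a base piece (the original $\Gamma$-part) times one piece per leg.

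The key step is a \emph{leg identity}: for each vertex $i\in I$ and each $\pi^i$ with $|\pi^i|=v_i$, summing the leg contributions over $\tau^{i,1},\dots,\tau^{i,l_i-1}$ with the prescribed sizes collapses to $\langle \tilde H_{\pi^i}(\x_i;q),h_{\mu^i}(\x_i)\rangle$, where $h_{\mu^i}=h_{\mu^i_1}\cdots h_{\mu^i_{l_i}}$. I would prove this by induction on $l_i$ using the Pieri rule for Hall--Littlewood polynomials, or equivalently by Hall's interpretation of the relevant structure constants as counting chains of subgroups in a finite abelian $\mathbb{F}_q[t]$-module of Jordan type $\pi^i$ with successive quotient dimensions $\mu^i_1,\ldots,\mu^i_{l_i}$. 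This is precisely the combinatorial mechanism that makes the identity work against the complete-symmetric basis $h_\muhat$ rather than against, say, $s_\muhat$ or $p_\muhat$, and it is analogous to the computation carried out in~\cite[Theorem~3.2.7]{aha2} in the star-shaped case.

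Once the leg identity is in hand, the rest is formal. The functional $\langle\cdot,h_\muhat\rangle$ factors through the alphabets $\x_1,\dots,\x_r$ independently, so after performing the leg summations inside the sum under $\Log$ on the right-hand side of Hua's formula for $\Gamma_\muhat$, one obtains exactly $\Log$ of the series~\eqref{maingen} paired against $h_\muhat$. Because $\Log$ is a plethystic operation on formal series graded by the total size of the multi-partition (with coefficients in a ring of symmetric functions), it commutes with the $\x$-linear functional $\langle\cdot,h_\muhat\rangle$, so the right-hand side becomes $\langle\bH,h_\muhat\rangle$. Comparing with the left-hand side of Hua's formula for $\Gamma_\muhat$ identifies this with $A_\muhat(q)$, as claimed. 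The main obstacle is the leg identity itself; the surrounding manipulations are bookkeeping with the plethystic logarithm and with Hua's formula applied to $\Gamma_\muhat$.
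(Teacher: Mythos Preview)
Your argument breaks at the step where you assert that the plethystic $\Log$ ``commutes with the $\x$-linear functional $\langle\cdot,h_\muhat\rangle$''. It does not: $\Log$ is built out of Adams operations $f\mapsto f[\x^d]$ on symmetric functions, and these do not commute with the linear functional ``extract the coefficient of $m_\muhat$''. Once you pair with a fixed $h_\muhat$ you are left with a scalar in $q$ (or in $q$ and some $T$'s), and there is no remaining symmetric-function grading for $\Log$ to act on plethystically, so the assertion does not even typecheck without further explanation. There is a closely related conflation between the $\Log$ in Hua's formula for $\Gamma_\muhat$, which is plethystic in the \emph{vertex variables} $T_{(i,k)}$, and the $\Log$ in the definition of $\bH$, which is plethystic in the \emph{symmetric-function variables} $\x_i$; passing from one to the other is precisely the nontrivial content of the theorem.

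The same issue infects your ``leg identity'' step. Inside the $\Log$ in Hua's formula for $\Gamma_\muhat$ one sums over multi-partitions of \emph{all} sizes, with the $T$-variables recording the dimensions; summing the leg partitions with \emph{prescribed} sizes $v_i-\mu^i_1-\cdots-\mu^i_k$ is coefficient extraction in the leg $T$-variables \emph{after} taking $\Log$, not an operation one can perform under the $\Log$. The paper's proof sidesteps both difficulties by never isolating a single $A_\muhat$. It proves the full symmetric-function identity $\bH=\sum_{\muhat}A_\muhat(q)\,m_\muhat$: using that indecomposability forces the leg maps to be injective, one identifies isomorphism classes in $\Rep^*(\Gamma_\muhat,\v_\muhat)$ with $\GL_\v$-orbits on $\Rep_{\F_q}(\Gamma,\v)\times\prod_i\calF_{\mu^i}$, obtains the product formula $\sum_\muhat G_\muhat(q)\,m_\muhat=\prod_d\Omega(\x^d;q^d)^{\phi_d(q)}$, and then takes $\Log$ \emph{in the $\x$-variables} on both sides. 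Pairing with $h_\muhat$ is only the last, trivial step. Your leg identity is morally the same combinatorics as the flag-variety count, but organizing it so that it interacts correctly with $\Log$ is exactly the missing work.
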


\begin{proof} 
Denote by
  $\Omega_\muhat$ the arrows of $\Gamma_\muhat$. The starting idea is
  that for any indecomposable representation $\varphi$ of
  $(\Gamma_\muhat,\v_\muhat)$, the coordinate $\varphi_{i\rightarrow
    j}$ of $\varphi$ at any arrow $i\rightarrow
  j\in\Omega_\muhat\backslash\Omega$ (i.e., $i\rightarrow j$ is an
  arrow on one of the added leg) must be injective (see \cite[Lemma
  3.2.1]{aha2}). Denote thus by ${\rm
    Rep}_\K\left(\Gamma_\muhat,\v_\muhat\right)^*$ the space of all
  representations of $\Gamma_\muhat$ of dimension $\v_\muhat$ whose
  coordinates at the legs are all injective. For a partition
  $\mu=(\mu_1,\mu_2,\dots,\mu_r)$ of $n$, denote by $\calF_\mu$ the
  set of partial flag of $\K$-vector spaces

$$
\{0\}\subset E^{r-1}\subset\cdots\subset E^1\subset E^0=\K^n,
$$
with ${\rm dim}\, E^i-{\rm dim}\, E^{i+1}=\mu_{i+1}$. Then we have a
natural bijection from the isomorphism classes of ${\rm
  Rep}_\K\left(\Gamma_\muhat,\v_\muhat\right)^*$ onto the orbit space

$$
\mathfrak{G}_\muhat:=\left.\left({\rm
      Rep}_\K\left(\Gamma,|\muhat|\right)
    \times\prod_{i=1}^r\calF_{\mu^i}\right)\right/ \GL_\v.
$$
Put $G_\muhat(q):=\#\mathfrak{G}_\muhat(\F_q)$. Then as in
\cite[Theorem 3.2.3]{aha2} we prove that

$$
\Log\left(\sum_{\muhat\in\calP^r}
  G_\muhat(q)m_\muhat\right)
=\sum_{\muhat\in\calP^r\backslash\{0\}}A_\muhat(q)m_\muhat,  
$$
where $m_\muhat$ denotes the monomial symmetric functions as in
\cite{macdonald}. Recall that the basis $\{m_\muhat\}_\muhat$ is dual
to $\{h_\muhat\}_\muhat$ with respect to the Hall pairing. The
analogue of Proposition \cite[Proposition 3.2.5]{aha2} reads \beq
\sum_\muhat G_\muhat(q)=\prod_{d=1}^\infty
\Omega(\x_1^d,\dots,\x_r^d;q^d)^{\phi_d(q)},
\label{aha2}\eeq
where $\Omega(\x_1,\dots,\x_r;q)$ is the series inside the brackets of
Formula (\ref{maingen}),
i.e. $$\Omega(\x_1,\dots,\x_r;q)=\Exp((q-1)^{-1}\bH(\x_1,\dots,\x_r;q))$$
and where $\phi_d(q)$ is the number of Frobenius orbits of
$\overline{\F}_q\backslash\{0\}$ of size $d$. Taking Log on both side
of Formula (\ref{aha2}) and using the properties of Log \cite[Lemma
2.1.2]{aha2} gives

$$
\sum_\muhat A_\muhat(q) m_\muhat=\bH(\x_1,\dots,\x_r;q),
$$
hence the result.
\end{proof}

\subsection{Proof of Theorem~\ref{main}}

\begin{proof}[Proof of Theorem \ref{maintheo}]

We start by proving (i).  Let us denote here by $\calQ_\tv/_\C$ and
$\calQ_\tv/_{\overline{\F}_q}$ the associated 
quiver varieties over the
indicated field. Assume also that the characteristic of $\F_q$ is
large enough so that the results of \S \ref{Weyl-gp-action} apply. Combining
Theorem \ref{Weyl} and Theorem \ref{count} we find that \beq
\left\langle \bH(\x_1,\dots,\x_r;q),p_\lambdahat\right\rangle=
\epsilon(w)\sum_i\Trace\left(\rho^{2i}(w),{
    H_c^{2i}\left(\calQ_\tv/_{\overline{\F}_q};
      \overline{\Q}_\ell\right)}\right) q^{i-d_\tv}.
\label{cc}\eeq

We deduce from Theorem \ref{compar} and the comment below the proof of Theorem \ref{compar} that 
\beq
\left\langle \bH(\x_1,\dots,\x_r;t),p_\lambdahat\right\rangle=
\epsilon(w)\sum_i\Trace\left(\varrho^{2i}(w),{
  H_c^{2i}(\calQ_\tv/_\C;\C)}\right) t^{i-d_\tv} 
\eeq
is an indentity in $\Q[t]$.

The Schur functions $s_\muhat$, with $\muhat\in\calP_\v$, decompose
into power symmetric functions as \cite[Chapter I, Proof of
(7.6)]{macdonald}
$$
s_\muhat=\sum_{\lambdahat\in\calP_\v}\chi^{\muhat}_\lambdahat
p_\lambdahat
$$
where $\chi^\muhat_\lambdahat=\chi^\muhat(w)$ is the value of the
irreducible character $\chi^\muhat$ of $W_\v$ at an element $w\in
W_\v$ of cycle type $\lambdahat$. Hence
\begin{align*}
\left\langle \bH(\x_1,\dots,\x_r;t),s_\muhat\right\rangle &=
q^{-d_\tv}\sum_i\left\langle\chi^\muhat\otimes\epsilon,\varrho^{2i}
\right\rangle_{W_\v} 
t^i\\ 
&=t^{-d_\tv}\sum_i\left\langle\chi^{\muhat'},\varrho^{2i}\right\rangle_{W_\v} t^i,
\end{align*}
and Theorem \ref{maintheo} (i) follows.  Note that $\bH_\muhat^s(t)$ is
a polynomial since $H_c^i(\calQ_{\tilde{\vv}};\C)=0$ unless
$d_\tv\leq i\leq 2d_\tv$ as the variety $\calQ_\tv$ is affine.

We now proceed with the proof of (ii).

Consider the partial
  ordering $\unlhd$ on partitions defined as $\lambda\unlhd\mu$ if for
  all $i$ we have $\sum_i\lambda_i\leq\sum_i\mu_i$. Extend this
  ordering on multi-partitions by declaring that
  $\alphahat\unlhd\betahat$ if and only if for all $i$, we have
  $\alpha^i\unlhd\beta^i$.  A simple calculation shows that if
  $\alphahat\unlhd\betahat$ and $\alphahat\neq\betahat$ for any two
  multi-partitions in $\calP^r$, then $d_\betahat <d_\alphahat$.

Using the relations between Schur functions and complete symmetric
functions \cite[page 101]{macdonald} together with Theorem
\ref{theoaha2} we find that  

\beq
A_\lambdahat(q)=\sum_{\muhat\unrhd\lambdahat}K_{\lambdahat\muhat}'\Hs_\muhat(q),\hspace{.5cm}
\Hs_\muhat(q)=
\sum_{\lambdahat\unrhd\muhat}K_{\muhat\lambdahat}^*A_\lambdahat(q),  
\label{form5}\eeq
where $K=(K_{\lambdahat\muhat})$ is the matrix of Kostka numbers, $K'$
and $K^*$ are respectively the transpose and the transpose inverse of
$K$. Recall \cite[\S 1.15]{kac2} that, if non-zero, $A_\muhat(q)$ is
monic of degree $d_\muhat$, and $A_\muhat(q)$ is non-zero if and only
if $\v_\muhat$ is a root of $\Gamma_\muhat$, with $A_\muhat(q)=1$ if
and only if $\v_\muhat$ is real \cite[\S 1.10]{kac2}. Since
$K_{\muhat\muhat}^*=1$ and since the polynomials $A_\lambdahat(q)$,
with $\lambdahat\unrhd\muhat$, $\lambdahat\neq\muhat$, are of degree
strictly less than $ d_\muhat$, we deduce from the second equality
(\ref{form5}) that if $\v_\muhat$ is a root then $\Hs_\muhat(q)$ is
monic of degree $d_\muhat$. Conversely if $\Hs_\muhat(q)$ is non-zero
then by the first formula (\ref{form5}) the polynomial $A_\muhat(q)$
must be non-zero (i.e. $\v_\muhat$ is a root) as the Kostka numbers
are non-negative, $K_{\muhat\muhat}=1$ and $\Hs_\muhat(q)$ has
non-negative coefficients. This completes the proof.
\end{proof}

\begin{remark}
\label{remark}
1.  When the dimension vector $\v$ is indivisible we can prove Theorem
\ref{main} using the ideas in \cite{letellier2} based on the theory of
perverse sheaves (although the context in \cite{letellier2} is
different). More precisely when $\v$ is indivisible and
$\muhat=(\mu^1,\dots,\mu^r)\in\calP_\v$ there exists a generic adjoint
orbit $\calO$ of $\gl_\v(\C)$ (see \S\ref{supernova}) such that each
component $\calO_i$ of $\calO$ in $\gl_{v_i}$ is of the form
$\zeta_i\cdot I_{v_i}+N_i$ where $\zeta_i\in\C$ and $N_i$ is nilpotent
with Jordan form given by the dual partition $(\mu^i)'$ of
$\mu^i$. Consider the associated singular complex quiver variety
$\calQ_{\v_\mu}:=\mu_\v^{-1}(\overline{\calO})/\!/\GL_\v$ (where
$\mu_\v$ is the moment map defined in \S \ref{supernova}). Then
following the strategy in \cite[Corollary 7.3.5]{letellier2} we can
prove that
 $$
\Hs_\muhat(q)= \sum \dim
  \left(IH_c^{2i}(\calQ_{\v_\muhat};\C)\right)
  q^{i-d_{\v_{\muhat}}}
 $$ 
 is up to a power of $q$ the Poincar\'e polynomial of
 $\calQ_{\v_\muhat}$ for the compactly supported intersection
 cohomology.

2. While in this paper we construct the action of $W_\v$ on
 $H_c^i(\calQ_\tv;\C)$ using the hyperk\"ahler structure on quiver
 varieties, in the case where $\v$ is indivisible it is possible to
 give an alternative construction of this Weyl group action as in
 \cite{letellier2} based on the theory of perverse sheaves and then
 give an alternative proof of Theorem~\ref{main}.
\end{remark}

\section{DT-invariants for symmetric quivers}

\subsection{Preliminaries}
\label{u}

Denote by $\Lambda$ the ring of symmetric functions in the variables
$\x=\{x_1,x_2,\dots\}$ with coefficients in $\Q(q)$ and $\Lambda_n$
those functions in $\Lambda$ homogeneous of degree $n$. We define the
$u$-specialization of symmetric functions as the ring homomorphism
$\Lambda \rightarrow \Q[u]$ that on power sums behaves as follows
$$
p_r(\x)\mapsto 1-u^r.
$$
In plethystic notation this is denoted by $f\mapsto f[1-u]$. 

Note that for any $f\in \Lambda_n$ the $u$-specialization $f[1-u]$ is
a polynomial in $u$ of degree at most $n$. We will need to consider
the effect of taking top degree coefficients in $u$ after
$u$-specialization. Define the  top degree of $f\in
\Lambda_n$ as
$$
[f]:=\left. u^nf\left[1-u^{-1}\right]\right|_{u=0}.
$$

It is a crucial fact for what follows that $u$-specialization and
taking its top degree coefficient commute with the $\Log$ map. More
precisely we have the following.
\begin{proposition}
\label{u-specializ-Log}
  Let $\Omega(\x;T)=\sum_{n\geq 0} A_n(\x)\ T^n\in \Lambda[[T]]$ be a
  power series with $A_n(\x)\in \Lambda_n$ and
  let $V_n(\x)\in \Lambda$ be defined by
$$
\sum_{n\geq 1} V_n(\x) \ T^n:=\Log\ \Omega(\x;T).
$$
Then we have

(i)
$$
\sum_{n\geq 1} V_n[1-u] \ T^n
= \Log \sum_{n\geq 0} A_n[1-u] \ T^n.
$$
and

(ii)
$$
\sum_{n\geq 1} [V_n] \ T^n
= \Log \sum_{n\geq 0} [A_n] \ T^n.
$$
\end{proposition}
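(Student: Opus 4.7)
The plan is to observe that both operations in question — the $u$-specialization $\phi\colon f\mapsto f[1-u]$ and the top-coefficient extraction $[\cdot]$ — extend to ring homomorphisms that intertwine the Adams operations $\psi_n$ appearing in the definition of $\Log$, so that each commutes with $\Log$ by naturality. Recall the Adams-operation expression
\[
\Log F = \sum_{n\ge 1}\frac{\mu(n)}{n}\,\psi_n(\log F),
\]
from which any ring homomorphism $\varphi$ satisfying $\varphi\psi_n=\psi_n\varphi$ automatically commutes with $\Log$. Thus for each part it suffices to exhibit the map as a $\psi$-equivariant ring homomorphism.

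For part (i), $\phi\colon\Lambda\to\Q[u]$, $p_r\mapsto 1-u^r$, is a ring homomorphism by definition of plethystic substitution. Equip $\Q[u]$ with the Adams operation $\psi_n(u):=u^n$ and extend $T$-adically on both sides via $\psi_n(T)=T^n$. Then $\psi_n\phi(p_r)=\psi_n(1-u^r)=1-u^{nr}=\phi(p_{nr})=\phi(\psi_n p_r)$, so $\phi$ is $\psi$-equivariant and the general principle gives (i).

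For part (ii), a direct check on homogeneous elements $f\in\Lambda_m$, $g\in\Lambda_n$ gives
\[
[fg]=u^{m+n}(fg)[1-u^{-1}]\big|_{u=0}=\bigl(u^m f[1-u^{-1}]\bigr)\bigl(u^n g[1-u^{-1}]\bigr)\big|_{u=0}=[f][g],
\]
so $[\cdot]$ is multiplicative on graded pieces. A routine check using the $\log$-expansion shows that $V_n\in\Lambda_n$ whenever $A_n\in\Lambda_n$, so the assignment $\sum C_n T^n\mapsto\sum[C_n]T^n$ is a well-defined ring homomorphism into $\Q[[T]]$ equipped with the Adams operations $\psi_n(T)=T^n$ (acting trivially on constants). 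A calculation on the power-sum basis then yields
\[
[\psi_k p_\lambda]=[p_{k\lambda}]=\prod_i(u^{k\lambda_i}-1)\big|_{u=0}=(-1)^{\ell(\lambda)}=[p_\lambda],
\]
so $[\psi_k C_n]=[C_n]$ for $C_n\in\Lambda_n$, which is exactly the required intertwining (the $T$-exponent $kn$ matches on both sides). Applying the general principle completes (ii).

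The main subtlety is pinning down the correct Adams operations on the target rings $\Q[u][[T]]$ and $\Q[[T]]$ so that the right-hand side $\Log$ agrees with the one inherited from $\Lambda[[T]]$ through $\phi$ or $[\cdot]$; once this compatibility is made explicit, the short power-sum verifications above do all the work and the rest is purely formal.
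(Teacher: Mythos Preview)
Your proof is correct, and the core ideas agree with the paper's, but you package them more abstractly. The paper works by hand: it introduces the intermediate quantities $U_n$ via $\sum U_n T^n/n=\log\Omega$, observes that a ring homomorphism commutes with $\log$, and then checks directly that the M\"obius relation $V_n=\tfrac1n\sum_{d\mid n}\mu(d)U_{n/d}(\x^d)$ survives the $u$-specialization because $p_{dr}[1-u]=1-u^{dr}=p_r[1-u^d]$. This last identity is precisely your $\psi$-equivariance check $\phi\psi_d=\psi_d\phi$, so your invocation of the general principle ``$\psi$-equivariant ring homomorphisms commute with $\Log$'' simply names what the paper verifies in coordinates. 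The one genuine difference is in part (ii): the paper derives it from (i) by the substitution trick $u\mapsto u^{-1}$, $T\mapsto uT$, $u=0$ (which amounts to extracting the top coefficient), whereas you treat $[\cdot]$ as a second $\psi$-equivariant ring homomorphism in its own right, with trivial Adams action on the target $\Q[[T]]$. Your route is slightly more uniform and makes clearer why both statements hold for the same reason; the paper's route is more self-contained, requiring no explicit $\lambda$-ring vocabulary.
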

\begin{proof}
Define $U_n(\x)\in \Lambda$ by 
$$
\sum_{n\geq 1} U_n(\x) \ \frac{T^n}n:=\log\ \Omega(\x;T).
$$
The relation between $U_n$ and $V_n$ is
\begin{equation}
\label{U-V}
V_n(\x):=\frac 1 n \sum_{d\mid n} \mu(d)\;
U_{n/d}(\x^d),
\end{equation}
where $\mu$ is the ordinary M\"obius function.

Since the $u$-specialization is a ring homomorphism we have
\begin{equation}
\label{u-specializ-log}
\sum_{n\geq 1} U_n[1-u] \ \frac{T^n}{n}= \log\
\sum_{n\geq 0} A_n[1-u] \ T^n.
\end{equation}
It is clear that $U_n(\x)$ is homogeneous of degree $n$ and hence we
may write it as $\sum_{|\lambda|=n}c_\lambda p_\lambda(\x)$ for some
coefficients $c_\lambda$. Therefore,
$$
V_n(\x)=\frac 1n \sum_{d \mid n} \mu(d) \sum_{|\lambda|=n/d} c_\lambda
p_\lambda(\x^d).
$$
Note that $p_\lambda(\x^d)=p_{d\lambda}(\x)$; so applying the
$u$-specialization to both sides we get
$$
V_n[1-u]=\frac 1n \sum_{d \mid n} \mu(d) \sum_{|\lambda|=n/d}
c_\lambda p_{d\lambda}[1-u].
$$
Similarly, since $p_{dr}[1-u]=1-u^{rd}=p_r[1-u^d]$, the inner sum on
the right hand side equals $U_{n/d}[1-u]$ evaluated at $u^d$ proving
(i).

To prove (ii) replace in~\eqref{u-specializ-log} $u$ by $u^{-1}$, $T$
by $uT$ and set $u=0$ to obtain
$$
\sum_{n\geq 1} [U_n] \ \frac{T^n}{n}= \log\
\sum_{n\geq 0} [A_n] \ T^n.
$$
 Now the claim follows from~\eqref{U-V}.
\end{proof}

\begin{proposition}
\label{HL-u-specializ}
For any partition $\lambda \in \calP$ we have 

(i)
$$
\tilde H_\lambda(q)[1-u]=(u)_l
$$
where $l:=l(\lambda)$ is the length of $\lambda$ and
$(u)_l:=\prod_{i=1}^l(1-q^{i-1}u)$. 

\medskip
(ii) $[\tilde H_\lambda]$ is zero unless $\lambda=(1^n)$
when it equals $(-1)^nq^{\binom n2}$.
\end{proposition}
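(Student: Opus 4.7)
The plan is to establish (i) by specializing a classical identity for Garsia--Haiman modified Macdonald polynomials and to deduce (ii) from (i) by a direct calculation.

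For part (i), in the conventions of \cite{aha} the transformed Hall--Littlewood polynomial $\tilde{H}_\lambda(\x;q)$ is obtained as the specialization $\tilde{H}_\lambda(\x;0,q)$ of the modified Macdonald polynomial $\tilde{H}_\lambda(\x;z,w)$. I would then invoke the well-known plethystic specialization of Garsia--Haiman,
\[
\tilde{H}_\lambda\bigl[1-u;\,z,w\bigr]=\prod_{c\in\lambda}\Bigl(1-u\,z^{a'(c)}w^{l'(c)}\Bigr),
\]
in which $a'(c),l'(c)$ denote the coarm and coleg of a cell $c\in\lambda$. Setting $z=0$ annihilates every factor except those with $a'(c)=0$; these are precisely the cells of the first column of $\lambda$, whose colegs run through $0,1,\dots,l(\lambda)-1$. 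Substituting $w=q$ then yields
\[
\tilde{H}_\lambda(q)[1-u]=\prod_{i=0}^{l(\lambda)-1}(1-uq^{i})=(u)_{l},
\]
which is (i). An alternative, more self-contained derivation would expand $\tilde{H}_\lambda$ in Schur functions, use that $s_\mu[1-u]$ vanishes unless $\mu$ is a hook (with easy explicit values on hooks), and apply the known form of the hook coefficients of a modified Hall--Littlewood polynomial.

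For part (ii), I would substitute (i) into the definition of the top-degree operation. With $n=|\lambda|$ and $l=l(\lambda)$, this gives
\[
[\tilde{H}_\lambda]=u^{n}\,\tilde{H}_\lambda(q)[1-u^{-1}]\Big|_{u=0}=u^{n}\prod_{i=1}^{l}\bigl(1-q^{i-1}u^{-1}\bigr)\Big|_{u=0}=u^{n-l}\prod_{i=1}^{l}(u-q^{i-1})\Big|_{u=0}.
\]
Since $n\geq l$ always, with equality precisely when every part of $\lambda$ equals $1$, the factor $u^{n-l}$ forces the value at $u=0$ to vanish unless $\lambda=(1^{n})$. In that case the remaining product evaluates at $u=0$ to $\prod_{i=1}^{n}(-q^{i-1})=(-1)^{n}q^{\binom{n}{2}}$, completing (ii).

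The only mildly delicate point is verifying that the normalization of $\tilde{H}_\lambda$ in \cite{aha} is exactly the $z=0$, $w=q$ specialization of Garsia--Haiman, so that the specialization formula above applies in the stated form. Modulo this bookkeeping, (i) is just a reformulation of a well-known identity and (ii) is the one-line calculation displayed above.
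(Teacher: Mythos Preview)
Your proposal is correct and follows exactly the paper's approach: the paper proves (i) by citing the Macdonald polynomial specialization in \cite[Corollary 2.1]{garsia-haiman} and says (ii) is an immediate consequence of (i), which is precisely what you spell out in detail. Your caveat about checking the normalization conventions is well placed, but modulo that bookkeeping your argument is a faithful expansion of the paper's two-line proof.
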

\begin{proof}
  The specialization (i) follows from the corresponding result for
  Macdonald polynomials, see~\cite[Corollary 2.1]{garsia-haiman}.
 The second claim is an immediate consequence of (i).
\end{proof}

  We will need one last fact.
\begin{lemma} For the Schur function $s_\lambda$ we have that
  $s_\lambda[1-u]$ is zero unless $\lambda=(r,1^{n-r})$ with $1\leq r
  \leq n$ is a hook, in which case it equals $(-u)^{n-r}(1-u)$. In
  particular, for $f\in\Lambda_n$ we have
\begin{equation}
\label{leading-coeff-fmla}
[f]=(-1)^n\langle f, s_{(1^n)}\rangle.
\end{equation}
\end{lemma}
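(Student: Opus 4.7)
The plan is to expand $s_\lambda$ via the Jacobi--Trudi determinant $s_\lambda=\det(h_{\lambda_i-i+j})_{1\le i,j\le \ell(\lambda)}$ after first computing $h_n[1-u]$. Starting from $\sum_{n\ge 0} h_n\,t^n=\exp\bigl(\sum_{r\ge 1} p_r t^r/r\bigr)$ and applying the plethystic substitution $p_r\mapsto 1-u^r$ gives the generating function
\[
\sum_{n\ge 0} h_n[1-u]\,t^n \;=\; \frac{1-ut}{1-t},
\]
from which $h_0[1-u]=1$ and $h_n[1-u]=1-u$ for all $n\ge 1$ (with the convention $h_n=0$ for $n<0$).

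Next I would dispose of the non-hook case. If $\lambda$ is not a hook then $\lambda_1\ge 2$ and $\lambda_2\ge 2$, so for every $j\ge 1$ both $\lambda_1+j-1$ and $\lambda_2+j-2$ are $\ge 1$. Thus rows $1$ and $2$ of the $u$-specialized Jacobi--Trudi matrix are identically $(1-u,1-u,\ldots,1-u)$ and the determinant vanishes, giving $s_\lambda[1-u]=0$.

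For the hook case $\lambda=(r,1^{n-r})$, which has length $\ell=n-r+1$, the specialized Jacobi--Trudi matrix has $1-u$ on and above the diagonal, $1$ on the subdiagonal (entries $h_0$), and $0$ strictly below (entries $h_k$ with $k<0$). Subtracting row $2$ from row $1$ produces a new first row $(-u,0,\ldots,0)$; Laplace expansion along this row reduces the determinant by a factor of $-u$ to the same shape in size $\ell-1$. Induction on $\ell$, with base case $\ell=1$ where the determinant is $h_r[1-u]=1-u$, yields $s_\lambda[1-u]=(-u)^{n-r}(1-u)$.

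Finally, for~\eqref{leading-coeff-fmla} it suffices by linearity to test on the Schur basis. Both sides vanish on non-hook $\lambda$. For a hook $\lambda=(r,1^{n-r})$, substitute $u^{-1}$ for $u$ and multiply by $u^n$ to obtain $u^n s_\lambda[1-u^{-1}]=(-1)^{n-r}(u^r-u^{r-1})$, which evaluates at $u=0$ to $(-1)^n$ precisely when $r=1$ (i.e.\ $\lambda=(1^n)$) and to $0$ otherwise. Since $\langle s_\lambda,s_{(1^n)}\rangle=\delta_{\lambda,(1^n)}$, both sides of~\eqref{leading-coeff-fmla} agree on every Schur function, proving the identity. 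No step presents a serious obstacle; the only real work lies in recognising the recursive structure of the specialized Jacobi--Trudi determinant in the hook case.
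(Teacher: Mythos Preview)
Your proof is correct and complete. The paper's own proof is essentially a one-line citation: it refers to \cite[(2.15)]{garsia-haiman} for the $u$-specialization of Schur functions and then says the identity~\eqref{leading-coeff-fmla} follows immediately. You instead give a self-contained argument via the Jacobi--Trudi identity, first computing $h_n[1-u]$ from the generating function and then analysing the specialized determinant directly. The non-hook case is handled cleanly by the observation that the first two rows become equal, and the hook case by a simple row reduction and induction. Your derivation of~\eqref{leading-coeff-fmla} by testing on the Schur basis is the natural one and is presumably what the paper means by ``follows immediately.'' The advantage of your route is that it is entirely elementary and avoids reliance on an external reference; the paper's route is shorter but less transparent.
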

\begin{proof}
  The $u$-specialization of the Schur functions is given
  in~\cite[(2.15)]{garsia-haiman}. The
  identity~\eqref{leading-coeff-fmla} follows immediately.
\end{proof}

\subsection{DT-invariants}\label{DT2}

In this section we prove a somewhat more general case of
Proposition~\ref{formal} (ii). We work with a \emph{symmetric quiver}
(a quiver with as many arrows going from the vertex $i$ to $j$ as
arrows going from $j$ to $i$) instead of the double of a quiver (see Remark~\ref{parity}).
The only difference is that the double of a
quiver has an even number of loops at every vertex whereas a symmetric
quiver may not. We deal with this by attaching an arbitrary number of
legs to each vertex instead of just one. In general, the parity of the
number of legs required at a vertex $i$ is the opposite of that of the
number of loops at $i$.

Concretely, attach $k_i\geq 1$ infinite legs to each vertex $i\in I$
of $\Gamma$. The orientation of the arrows ultimately does not matter
but say all the arrows on the new legs point towards the vertex.
Consider the following generalization of~\eqref{maingen}
\begin{equation}
\label{gen-fctn}
\bH(\z;q):=(q-1)\Log\,\left(\sum_{\pihat\in \calP^r}
 \calH_\pihat(q)\tilde H_\pihat(\z;q)\right),
\end{equation}
where to simplify we let
$$
\tilde H_\pihat(\z;q):=\prod_{i=1}^r\prod_{j=1}^{k_i} 
\tilde H_{\pi^i}(\x^{i,j};q)
$$
and $\x^{i,j}=(x^{i,j}_1,x^{i,j}_2,\ldots)$ for $i=1,\ldots,r$ and
$j=1,\ldots, k_i$ are independent sets of infinitely many variables.

Given a multi-partition $\muhat=(\mu^{i,j})$ where $i=1,\ldots,r$ and
$j=1,\ldots,k_i$ define
$$
s_\muhat(\z):=\prod_{i=1}^r \prod_{j=1}^{k_i}s_{\mu^{i,j}}(\x^{i,j})
$$
and 
\beq
\Hs_\muhat(q):=\langle \bH(\z;q), s_\muhat(\z)\rangle.
\label{hua-u}
\eeq
Note that $\Hs_\muhat(q)$ is zero unless $|\mu^{i,1}|=|\mu^{i,2}|=\cdots
=|\mu^{i,k_i}|$ for each $i=1,\cdots,r$.

For $\v\in\N^r\backslash\{0\}$, denote by $1^\v$ the multi-partition
$(\mu^{i,j})$ where for every $j=1,\ldots,k_i$ either $\mu^{i,j}=(1^{v_i})$
if $v_i>0$ or $\mu^{i,j}=0$ otherwise.

\begin{proposition} We have
\beq
(q-1)\,\Log\,\left(\sum_{\v\in\N^r}\frac{
q^{-\frac12(\gamma(\v)+\delta(\v))} 
}{(q^{-1})_\v}\ (-1)^{\delta(\v)}T^\v \right)
=\sum_{\v\in\N^r\backslash\{0\}}
\Hs_{1^\v}(q)(-1)^{\delta(\v)}T^\v,
\label{hua-u2}
\eeq 
where 
$$
\gamma(\v):=\sum_{i=1}^r(2-k_i)v_i^2-
2\sum_{i\rightarrow j\in\Omega}v_iv_j,\qquad \delta(\v):=\sum_{i=1}^rk_iv_i,$$
and
$\qquad (q)_\v:=(q)_{v_1}\cdots (q)_{v_r}$ with $(q)_s:=(1-q)\cdots(1-q^s)$. 
\end{proposition}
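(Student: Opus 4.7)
The plan is to apply the top-degree operation $[\,\cdot\,]$ of \S\ref{u} to every variable set $\x^{i,j}$ appearing in the generating function~\eqref{gen-fctn}, using Propositions~\ref{u-specializ-Log}(ii) and~\ref{HL-u-specializ}(ii) together with the identity~\eqref{leading-coeff-fmla}. Concretely, introduce formal variables $T_1,\ldots,T_r$ with $T^{|\pihat|}:=\prod_i T_i^{|\pi^i|}$ and set
\begin{equation*}
F(\z;q;T):=\sum_{\pihat\in\calP^r}\calH_\pihat(q)\,\tilde H_\pihat(\z;q)\,T^{|\pihat|},
\qquad \bH(\z;q;T)=(q-1)\Log F(\z;q;T).
\end{equation*}
The coefficient of $T^\v$ in $F$ is homogeneous of degree $v_i$ in each variable set $\x^{i,j}$, so Proposition~\ref{u-specializ-Log}(ii), applied once per variable set and iterated, yields $[\bH(\z;q;T)]_{\mathrm{all}}=(q-1)\Log\,[F(\z;q;T)]_{\mathrm{all}}$, where $[\,\cdot\,]_{\mathrm{all}}$ denotes the composite of $[\,\cdot\,]$ over all variable sets.

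For the source side, Proposition~\ref{HL-u-specializ}(ii) annihilates every term except those with $\pi^i=(1^{v_i})$ for all $i$, each of the $k_i$ legs at vertex~$i$ contributing a factor $(-1)^{v_i}q^{\binom{v_i}{2}}$. A direct computation using $\langle(1^a),(1^b)\rangle=ab$ and $m_k((1^{v_i}))=v_i\delta_{k,1}$ gives $\calH_{1^\v}(q)=q^{\sum_{i\to j\in\Omega}v_iv_j-\sum_i v_i^2}/(q^{-1})_\v$, hence
\begin{equation*}
[F(\z;q;T)]_{\mathrm{all}}=\sum_{\v\in\N^r}\frac{(-1)^{\delta(\v)}\,q^{\,e(\v)}}{(q^{-1})_\v}\,T^\v,\qquad
e(\v):=\sum_{i\to j\in\Omega}v_iv_j+\sum_i k_i\binom{v_i}{2}-\sum_i v_i^2.
\end{equation*}
On the target side, applying the identity~\eqref{leading-coeff-fmla} separately in each variable set yields $[\bH(\z;q;T)]_{\mathrm{all}}=\sum_{\v}(-1)^{\delta(\v)}\Hs_{1^\v}(q)\,T^\v$ by the definition~\eqref{hua-u} of $\Hs_\muhat$.

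The proof is then completed by the bookkeeping identity $e(\v)=-\tfrac12(\gamma(\v)+\delta(\v))$, which is immediate from $\binom{v_i}{2}=(v_i^2-v_i)/2$ and the definitions of $\gamma$ and $\delta$. The only technical point in the plan is the legitimacy of iterating Proposition~\ref{u-specializ-Log}(ii) over distinct variable sets; this is immediate since each application is a ring homomorphism preserving the $T$-grading of the remaining factors, so the proposition applies at each step of the iteration.
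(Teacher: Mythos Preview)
Your proof is correct and follows essentially the same route as the paper: apply the top-degree operation $[\,\cdot\,]$ to all variable sets in~\eqref{gen-fctn} via Proposition~\ref{u-specializ-Log}(ii), evaluate the right-hand side with Proposition~\ref{HL-u-specializ}(ii), and identify the left-hand side through~\eqref{leading-coeff-fmla}. The explicit computation of $\calH_{1^\v}(q)$ and the verification $e(\v)=-\tfrac12(\gamma(\v)+\delta(\v))$ are more detailed than the paper's ``A calculation using Proposition~\ref{HL-u-specializ}(ii) shows\ldots'', but the argument is the same.
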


\begin{proof}
By~\eqref{hua-u} we have
$$
 \bH(\z;q)=\sum_\muhat \Hs_\muhat(q) s_\muhat(\z).
$$
Apply Proposition~\ref{u-specializ-Log}~(ii) to all the variables
$\x^{i,j}$ in~\eqref{gen-fctn} to get
$$
\sum_\muhat \Hs_\muhat(q) [s_\muhat(\z)]T^{|\muhat|}=
(q-1)\,\Log\,\left(\sum_{\pihat\in \calP^r} 
 \calH_\pihat(q)[\tilde H_\pihat(\z;q)]T^{|\pihat|}\right),
$$
where $\muhat$ runs through the non-zero multi-partitions $(\mu^{i,j})$
with $|\mu^{i,j}|=v_i$ for some $v_i\in \N$ independent of $j$,
$T^{|\muhat|}:=\prod_iT_i^{v_i}$ and
$T^{|\pihat|}=\prod_iT_i^{|\pi_i|}$.  A calculation using
Proposition~\ref{HL-u-specializ}~(ii) shows that the right hand side
equals
$$
(q-1)\,\Log\,\left(\sum_{\v\in\N^r}\frac{
q^{-\frac12(\gamma(\v)+\delta(\v))} 
}{(q^{-1})_\v}\ (-1)^{\delta(\v)}T^\v \right)
$$
Finally,~\eqref{leading-coeff-fmla} shows that the left hand side
equals 
$$
\sum_{\v\in\N^r\backslash\{0\}}
\Hs_{1^\v}(q)(-1)^{\delta(\v)}T^\v
$$
and our claim is proved.
\end{proof}

Now let $\Gamma'=(I,\Omega')$ be any symmetric quiver with $r$
vertices. The Donaldson--Thomas invariants for a symmetric quiver
$\Gamma'$, as defined by Kontsevich and Soibelman, are given as
follows in an equivalent formulation.

 Let $c_{\v,k}$ be the coefficients in the generating
function identity
\begin{equation}
\label{DT-inv}
\Log \sum_\v\frac{
(-q^{\frac12})^{\gamma'(\v)}
}{(q)_\v}\ T^\v= (1-q)^{-1}\sum_\v \sum_k (-1)^kc_{\v,k}\ q^{k/2}\ T^\v,
\end{equation}
where
$$
\gamma'(\v):=\sum_{i=1}^rv_i^2-\sum_{i\rightarrow j\in\Omega'}v_iv_j.
$$
Put
$\Omega_\v(q):=\sum_k c_{\v,k}\, q^{k/2}$ ; it is a Laurent polynomial in
$q^{\tfrac12}$ (See~\cite[p.15]{efimov}.) 

Efimov~\cite[Thm. 4.1]{efimov} proves that if $c_{\v,k}$ is non-zero
then $k\equiv \gamma'(\v)\bmod 2$. Since $\Gamma'$ is symmetric we
also have $\gamma'(\v)\equiv \delta'(\v)\bmod 2$, where $\delta'$ is a
fixed linear form
$$
\delta'(v):=\sum_{i=1}^rk_i'v_i, \qquad k_i'\in\Z_{>0}, \qquad
k_i'\equiv a_{i,i}'-1 \bmod 2
$$
with $a_{i,i}'$ the number loops at the vertex $i$ of $\Gamma'$.
Hence we may write~\eqref{DT-inv} as
\begin{equation}
(1-q)\, \Log \sum_\v\frac{
q^{\frac12\gamma'(\v)}}
{(q)_\v}\ (-1)^{\delta'(\v)}T^\v= \sum_\v \Omega_\v(q)\
(-1)^{\delta'(\v)}T^\v. 
\end{equation}
Changing $q\mapsto q^{-1}$ and
then $T_i\mapsto q^{-k_i'/2}T_i$, we find that  
 \begin{equation}
\label{omega-fmla}
(q-1)\, \Log \sum_\v\frac{
q^{-\frac12 (\gamma'(\v)+\delta'(\v))}}
{(q^{-1})_\v}\ (-1)^{\delta'(\v)}T^\v= \sum_\v q^{1-\frac 12
  \delta'(\v)}\Omega_\v(q^{-1})\ (-1)^{\delta'(\v)}T^\v
\end{equation}

We extend the definition of $\DT_\v$ given in~\ref{DT-inv-defn} to
$\Gamma'$ by setting
 \begin{equation}
\label{DT-inv-defn-1}
(q-1)\, \Log \sum_\v\frac{
q^{-\frac12 (\gamma'(\v)+\delta'(\v))}}
{(q^{-1})_\v}\ (-1)^{\delta'(\v)}T^\v= 
\sum_{\v\in \N^r\backslash\{0\}}
\DT_\v(q)\,(-1)^{\delta'(\v)} T^\v.
\end{equation}
Up to powers of $q$ the definition of $\DT_\v(q)$ is independent of
the choice of linear form $\delta'$ and we do not include it in the
notation. Note that then
$$
\DT_\v(q)=q^{1-\frac 12
  \delta'(\v)}\Omega_\v(q^{-1}).
$$

We would like to match (\ref{DT-inv-defn-1}) with (\ref{hua-u2}) by
making appropriate choices for $\Gamma$ and $k_i$. Denote by $a_{i,j}$
(resp. $a_{i,j}'$) the number of arrows of $\Gamma$ (resp. $\Gamma'$)
going from $i$ to $j$. To match $\gamma$ with $\gamma'$ requires that
\beq a'_{i,j}+a'_{j,i}=2(a_{i,j}+a_{j,i}), \quad i\neq j, \qquad
\qquad k_i-2+2a_{i,i}=-1+a'_{i,i}.
\label{cond}\eeq
This we can always do (typically in more than one way) because
$\Gamma'$ is symmetric. We have then

\begin{proposition} With the above notation let $\Gamma$ be a quiver
  and $k_i$ be integers satisfying (\ref{cond}). Then
\begin{equation}
\label{DT-H-general}
\DT_\v(q)=q^{\delta(v)-\delta'(v)}\Hs_{1^\v}(q)
\end{equation}
for all $\v\in\N^r\backslash\{0\}$. 
\label{propDT}
\end{proposition}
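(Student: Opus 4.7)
The plan is to compare the two generating function identities~\eqref{hua-u2} and~\eqref{DT-inv-defn-1} directly, show that they are equal up to a simple monomial substitution $T_i \mapsto q^{c_i}T_i$ in the variables, and then read off the claimed identity by comparing coefficients.

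First I would verify the two combinatorial identifications that make the comparison possible. Writing $\sum_{i\to j\in \Omega}v_iv_j=\sum_i a_{i,i}v_i^2+\sum_{i\neq j}a_{i,j}v_iv_j$ and symmetrizing the off-diagonal sum as $2\sum_{i\neq j}a_{i,j}v_iv_j=\sum_{i\neq j}(a_{i,j}+a_{j,i})v_iv_j$, and similarly expanding $\sum_{i\to j\in \Omega'}v_iv_j$ using $a'_{i,j}=a'_{j,i}$, the first condition in~\eqref{cond} collapses the off-diagonal parts of $\gamma(\v)$ and $\gamma'(\v)$, while the second condition collapses the diagonal parts, giving $\gamma(\v)=\gamma'(\v)$ for every dimension vector. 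Next, from $k_i=1+a'_{i,i}-2a_{i,i}$ and the congruence $k_i'\equiv a'_{i,i}-1\pmod 2$ one sees $k_i\equiv k_i'\pmod 2$, so $(-1)^{\delta(\v)}=(-1)^{\delta'(\v)}$ for every $\v$.

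With these two identities in hand, the LHS of~\eqref{hua-u2} and the LHS of~\eqref{DT-inv-defn-1} differ only through the exponent $-\tfrac12\delta(\v)$ versus $-\tfrac12\delta'(\v)$ on $q$ in the summand. This discrepancy is absorbed by the substitution $T_i\mapsto q^{\frac12(k_i-k_i')}T_i$, under which $T^\v$ acquires the factor $q^{\frac12(\delta(\v)-\delta'(\v))}$. Applying this substitution to both sides of~\eqref{hua-u2} turns its LHS into the LHS of~\eqref{DT-inv-defn-1}. Comparing coefficients of $T^\v$ on the RHS and cancelling the equal signs $(-1)^{\delta(\v)}=(-1)^{\delta'(\v)}$ yields the proportionality $\DT_\v(q)=q^{\frac12(\delta(\v)-\delta'(\v))}\Hs_{1^\v}(q)$ (the statement of the proposition, with the stated exponent understood up to this factor of $\tfrac12$ coming from the symmetric normalization).

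There is no real analytic obstacle here; the only thing one has to be careful about is book-keeping of the $q$-powers and the signs. The nontrivial content is really packed into the two earlier observations: (a) that~\eqref{hua-u2} extracts the sign-isotypic piece of $\bH$, which was established using Propositions~\ref{u-specializ-Log} and~\ref{HL-u-specializ}, and (b) that the matching conditions~\eqref{cond} are exactly what is required to turn the Hall--Littlewood/Hua-style bilinear form $\gamma(\v)$ into the DT bilinear form $\gamma'(\v)$. Once these are in place the proposition follows from a one-line substitution of variables in the plethystic $\Log$ identity, using that $\Log$ is compatible with monomial rescalings of the $T_i$.
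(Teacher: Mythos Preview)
Your approach is exactly the one the paper has in mind: the proposition is stated immediately after assembling~\eqref{hua-u2} and~\eqref{DT-inv-defn-1}, and the intended proof is precisely the comparison of these two generating series via the matching conditions~\eqref{cond}, followed by a monomial rescaling of the $T_i$. Your verification that $\gamma(\v)=\gamma'(\v)$ and $(-1)^{\delta(\v)}=(-1)^{\delta'(\v)}$ under~\eqref{cond} is correct, and the observation that $\Log$ is compatible with substitutions $T_i\mapsto q^{c_i}T_i$ (with $c_i\in\Z$, as guaranteed by $k_i\equiv k_i'\bmod 2$) is the only point that needs care.

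You are also right about the exponent: the comparison genuinely yields $\DT_\v(q)=q^{\frac12(\delta(\v)-\delta'(\v))}\Hs_{1^\v}(q)$, and the missing $\tfrac12$ in the displayed formula~\eqref{DT-H-general} is a typographical slip in the paper. This has no effect on the application to Proposition~\ref{formal}(ii), since in the double-quiver case one takes $k_i=k_i'=1$ and hence $\delta=\delta'$.
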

A special case of Proposition~\ref{propDT} is when $\Gamma'=\overline
\Gamma$ for some quiver $\Gamma$. In this case we may take
$k_i=k_i'=1$ for all $i$ and~\eqref{DT-H-general} is claim (ii) of
Proposition~\ref{formal}.

\section{Examples}
{\it Example 1.} \ Let $\Gamma'=S_m$ be the quiver with one
node and $m$ loops. We can take $\Gamma$ to be the quiver with one
node and no arrows and take $k=k'=m+1$. Then
$\gamma'(n)=\gamma(n)=(1-m)n^2$, $\delta(n)=\delta'(n)=(m+1)n$ and
hence
$$
\sum_{n\geq1}\DT_n(q)\ (-1)^{(m-1)n}T^n=
(q-1)\ \Log \sum_{n\geq 0}
  \frac{q^{(m-1)\binom n 2-n}}{(q^{-1})_n}\; 
 (-1)^{(m-1)n}T^n.
$$
Up to a power of $q$ the invariants $\DT_n(q)$ are those considered by
Reineke~\cite{reineke}. Here is a list of the first few values. For
$m=0,1$ we have $\DT_1(q)=1$ and $\DT_n(q)=0$ for all $n> 1$.

\bigskip
$$
m=2
$$
$$
\begin{array}{l|l}
n & \DT_n(q)\\
\hline
1 & 1\\
2 & 1\\
3 & q\\
4 & q^3 + q\\
5 & q^6 + q^4 + q^3 + q^2 + q\\
6 & q^{10} + q^8 + q^7 + 2q^6 + q^5 + 3q^4 + q^3 + 2q^2 + q
\end{array}
$$
\bigskip
$$
m=3
$$
$$
\begin{array}{l|l}
n & \DT_n(q)\\
\hline
1 & 1\\
2 & q\\
3 & q^4 + q^2 + q\\
4 & q^9 + q^7 + q^6 + 2q^5 + q^4 + 2q^3 + q^2 + q\\
5 & q^{16} + q^{14} + q^{13} + 2q^{12} + 2q^{11} + 3q^{10} + 3q^9 + 4q^8 +
4q^7 + 5q^6 + 4q^5 + 4q^4 + 3q^3 + 2q^2 + q 
\end{array}
$$

\bigskip
\noindent {\it Example 2.} \ Let $\Gamma$ be the $A_2$ quiver, i.e.,
two nodes connected by an arrow, and let $k_1=k_2=1$. Then $\Gamma'$
is its double; the quiver with two nodes and an arrow between them in
each direction. In this case, $\tilde \Gamma$ is actually a finite
quiver (of type $A$). It follows that $\DT_n(q)$ is zero unless $\v$ is
a root of $A_2$; i.e. $\v=(0,1),(1,0)$, or $(1,1)$ and $\DT_\v(q)=1$
(see Corollary~\ref{pols-corollary}).

Using that
\begin{equation}
\label{q-fac}
(q)_n=(-1)^nq^{\binom n2 +n}(q^{-1})_n
\end{equation}
we can write~\eqref{hua-u2} as
$$
(q-1)\;\Log\; \sum_{n_1,n_2} \frac{q^{n_1n_2}}{(q)_{n_1}(q)_{n_2}} \;
  T_1^{n_1}T_2^{n_2}= -T_1-T_2+T_1T_2,
$$
which can be proved directly in an elementary way using the
$q$-binomial theorem (see~\cite[Prop. 1]{kashaev}). This identity is
related to the quantum pentagonal identity.

This case is the only case, other than the trivial quiver with one
vertex and no arrows, where $\tilde \Gamma$ is a finite quiver, or,
equivalently, where the right hand side of~\eqref{hua-u2} has only
finitely many terms.

\bigskip
\noindent {\it Example 3.} \ Let $\Gamma$ be the quiver with $r$ nodes
and $(m-1)\min(i,j)$ arrows pointing from vertex $i$ to $j$ and let
$k_i=2$ for all $i$. Then up to a power of $q$ the $\DT_\v(q)$
invariants equal the truncated form $A_\lambda(q)$ of the \Apol for
the $S_m$ quiver considered in~\cite{RV}. In particular, this shows
that these truncations $A_\lambda(q)$ are indeed, as conjectured,
polynomials in $q$ and have non-negative integer coefficients.

   \end{document}